\documentclass[a4paper,11pt]{article}

\usepackage[T1]{fontenc}
\usepackage[utf8]{inputenc}
\DeclareUnicodeCharacter{00A0}{~}
\usepackage{a4wide}
\usepackage{amssymb,amsmath,amscd,amsfonts,amsthm,bbm,mathrsfs,enumerate}
\usepackage[colorlinks=true, linkcolor=blue, citecolor=blue]{hyperref}
\usepackage{shortcuts}
\usepackage{color}
\usepackage{graphicx}
\usepackage{tikz,pgfplots}
\usepackage{color}


\theoremstyle{plain} 
\newtheorem{theorem}{Theorem}
\newtheorem{lemma}{Lemma}
\newtheorem{assumption}{Assumption}
\newtheorem{remark}{Remark}

\newtheorem{definition}{Definition}

\newtheorem{proposition}[theorem]{Proposition}
\DeclareMathOperator{\acc}{acc}

\title{Stochastic Proximal Subgradient Descent Oscillates in the Vicinity of its Accumulation Set}

\author{Sholom Schechtman}
\date{March 2021}
\begin{document}

\maketitle

\begin{abstract}
  We analyze the stochastic proximal subgradient descent in the case where the objective functions are path differentiable and verify a Sard-type condition. While the accumulation set may not be reduced to unique point, we show that the time spent by the iterates to move from one accumulation point to another goes to infinity. An oscillation-type behavior of the drift is established. These results show a strong stability property of the proximal subgradient descent. Using the theory of closed measures, Bolte, Pauwels and R\'ios-Zertuche \cite{bol_zer_pau20} established this type of behavior for the deterministic subgradient descent. Our technique of proof relies on the classical works on stochastic approximation of differential inclusions, which allows us to extend results in the deterministic case to a stochastic and proximal setting, as well as to treat these different cases in a unified manner.
\end{abstract}
\section{Introduction}
Let $d$ be a positive integer, let $\cX$ be a nonempty, closed and convex set and let $f,g: \bbR^d \rightarrow \bbR$ be two locally Lipschitz functions. In this note, we study the behavior of the stochastic proximal subgradient descent (SPGD):
\begin{equation}\label{eq:spgd_incl}
  x_{n+1} \in \prox^{\gamma_n}_{g, \cX}( x_n - \gamma_n v_n  + \gamma_n \eta_{n+1}) \, ,
\end{equation}
where $\prox^{\gamma_n}_{g, \cX}$ is the proximal operator for the function $g$ on $\cX$ (see Eq.~\eqref{eq:prox} for a definition), $(\gamma_n)$ is a sequence of stepsizes, $(\eta_n)$ is a noise sequence and for each $n \in \bbN$, $v_n$ is in the set $\partial f(x_n)$ of Clarke's subgradients of $f$ at $x_n$.

Let $\cN_{\cX}(x)$ be the normal cone of $\cX$ at $x$. It is known (see \cite{dav-dru-kak-lee-19}, \cite{maj-mia-mou18}) that, under mild conditions on $f$, $g$ and $(\eta_n)$, every limit point of $(x_n)$ is included in the set $\cZ := \{ x: 0 \in \partial f(x) + \partial g(x) + \cN_{\cX}(x)\}$.
 The proof leans on the seminal paper of Benaïm, Hofbauer and Sorin \cite{ben-hof-sor-05} (see also \cite{ben-(cours)99}, \cite{duchi2018stochastic}), which analyzes Eq.~\eqref{eq:spgd_incl} as an Euler-like discretization of the differential inclusion (DI):
\begin{equation}\label{eq:DI_prox}
  \dot{\sx}(t) \in - \partial f(\sx(t)) - \partial g(\sx(t)) - \cN_{\cX}(\sx(t))\, .
\end{equation}



While the sequence $(x_n)$ is known to converge to $\cZ$, recent work \cite{zer20} shows that in principle, it might not converge to a unique point. In \cite[Section 2]{zer20} R\'ios-Zertuche considers the deterministic subgradient descent (that is to say $g=0$, $\cX = \bbR^d$, $\eta_n= 0$) and constructs $f$, which verifies main assumptions of nonsmooth optimization (such as Whitney stratifiability or Kurdyka-\L{}ojasiewicz inequality) but the limit set of $(x_n)$ is equal to $\cZ= \{ x: \norm{x} =1\}$. This encourages a more precise study of Eq.~\eqref{eq:spgd_incl}.

 In \cite{bol_zer_pau20} the authors,  using the theory of closed measures, show that in the case of the deterministic subgradient descent
the convergence to $\cZ$ arises in a structured manner. First, they prove that if $x, y$ are two distinct accumulation points of $(x_n)$, then the time that the iterates spend to get from a neighborhood of $x$ to a neighborhood of $y$ goes to infinity. Second, in a first approximation their results imply that if $x$ is an accumulation point of $(x_n)$, then
\begin{equation*}\label{eq:osc}
  \frac{\sum_{i=1}^n \gamma_{i} v_i \1_{x_i \in B(x, \delta)}}{\sum_{i=1}^n \gamma_{i} \1_{x_i \in B(x, \delta)}} \xrightarrow[n \rightarrow + \infty]{} 0 \, ,
\end{equation*}
(see \cite[Th. 7]{bol_zer_pau20} or Section~\ref{sec:main} for a precise statement).
Intuitively speaking, this means that even if $x_n - x_0 = \sum_{i=0}^n \gamma_i v_i$ does not converge, on average, the drift coming from the subgradients compensate itself and vanishes at infinity. This behavior captures an oscillation phenomenon of the iterates around the critical set. Results of this type show a strong stability property of the deterministic subgradient descent.

In practical settings, when the function $f$ is either unknown or computation of its gradient is expensive, the deterministic gradient descent is often replaced by its stochastic version, in many cases, this may lead to a faster convergence (see e.g. \cite{bottou2018optimization}). Proximal methods, on the other hand, along with the regularizer function $g$, are widely used to regularize the initial problem of minimizing $f$. Depending on the choice of $g$, we can, for instance, preserve the boundedness of the iterates \cite{duchi2018stochastic} or promote the sparsity of solutions \cite{tibs_lasso}. It is therefore interesting  to establish stability results of the type \cite{bol_zer_pau20} for the SPGD.

In this work we investigate further the questions of oscillations of the SPGD. Our contributions are threefold. First, we show that the time spent by the SPGD to move from one accumulation point to another goes to infinity. Second, we establish an oscillation-type behavior of the drift. These two results extend \cite[Th. 7.]{bol_zer_pau20} to a stochastic and a proximal setting. Finally, our technique of proof doesn't rely on the theory of closed measures used in \cite{bol_zer_pau20} but is build upon the classical works on stochatic approximation of differential inclusions (\cite{dav-dru-kak-lee-19}, \cite{duchi2018stochastic}, \cite{ben-hof-sor-05}). We feel that this approach gives a simpler proof and allows us to treat the deterministic, the stochastic and the proximal cases in a unified manner.\\





\noindent\textbf{Paper organisation.} In Section~\ref{sec:prelim}, we recall some known facts about Clarke subgradient, path differentiable functions and differential inclusions. Our main results are given in Section~\ref{sec:main}. Section~\ref{sec:proof} is devoted to proofs.

\section{Preliminaries}\label{sec:prelim}
\subsection{Notations}
For $S \subset \bbR^d$, we denote $\cl S$ its closure and $\conv S$ its closed convex hull. For a function $F : \bbR^d \rightarrow \bbR$, we denote $\nabla F$ its gradient. Constants will usually be denoted as $C, C_1, C_2 \dots$, they can change from line to line. For a sequence $(x_n)$, we denote $\acc \{ x_n\}$ its set of accumulation points. The space of continuous functions from $\bbR_+$ to $\bbR^d$ will be denoted as $\cC(\bbR_+, \bbR^d)$, we endow this set with $\bs d$ the metric of uniform convergence on compact intervals, that is to say

\begin{equation}
  \bs d(\sx_n, \sy) \xrightarrow[n \rightarrow + \infty]{} 0 \iff \forall T >0, \sup_{h \in [0, T]} \norm{\sx_n(h) - \sy(h)} \xrightarrow[n \rightarrow + \infty]{} 0 \, .
\end{equation}

\subsection{Perturbed solutions of differential inclusions}\label{sec:pert_di}
We say that $\sH : \bbR^d \rightrightarrows \bbR^d$ is a set valued map if for each $x \in \bbR^d$ we have that $\sH(x)$ is a subset of $\bbR^d$. Consider the DI:
\begin{equation}\label{eq:di_def}
  \dot{\sx}(t) \in \sH(\sx(t))\, .
\end{equation}
We say that an absolutely continuous curve (a.c.) $\sx : \bbR_+ \rightarrow \bbR^d$ is a solution to~\eqref{eq:di_def} starting at $x \in \bbR^d$, if $\sx(0) = x$ and Eq.~\eqref{eq:di_def} holds for almost every $t\geq 0$, we denote $\sS_{\sH(x)}$ the set of these solutions and $\sS_{\sH} = \cup_{x \in \bbR^d}\sS_{\sH}(x)$.
 We say that $\sH$ is \emph{upper semi continuous at a point $x \in \bbR^d$ }
if for every $U$ a neighborhood of $\sH(x)$, there is $\delta > 0$ such that $\norm{y - x} \leq \delta \implies \sH(y) \subset U$. We say that $\sH$ is \emph{upper semi continuous} (usc) if it is upper semicontinuous at every point. We have the following existence result.
\begin{theorem}[\cite{aub-cel-(livre)84}]\label{th:aub_cel}
  Assume that, for each $x$ in $ \bbR^d$, $\sH(x)$ is nonempty, convex and compact, and there is a constant $C \geq 0$ s.t. $\sup \{ \norm{v}: v \in \sH(x)\} \leq C ( 1+ \norm{x})$. Assume that $\sH$ is usc, then for every $x  \in \bbR^d$,
   the set $\sS_{\sH(x)}$ is nonempty.
\end{theorem}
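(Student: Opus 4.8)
The plan is to prove existence on an arbitrary finite horizon $[0,T]$ and then extend to all of $\bbR_+$; the linear growth bound will guarantee that no solution escapes to infinity in finite time, so the two steps combine cleanly. On $[0,T]$ I would construct approximate solutions by an Euler polygonal scheme: partition $[0,T]$ into $n$ equal subintervals with nodes $t_k = kT/n$, and define $\sx_n$ recursively by choosing, at each node, some $v_k \in \sH(\sx_n(t_k))$ (nonempty by hypothesis) and setting $\sx_n(t) = \sx_n(t_k) + (t - t_k) v_k$ for $t \in [t_k, t_{k+1}]$. Each $\sx_n$ is then piecewise affine with $\dot{\sx}_n(t) \in \sH(\sx_n(t_k))$ on each subinterval.

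The first technical step is a uniform a priori estimate. Using the linear growth condition $\norm{v} \le C(1 + \norm{x})$ together with a discrete Grönwall argument, I would bound $\sup_{t \in [0,T]} \norm{\sx_n(t)}$ uniformly in $n$. This immediately yields a uniform bound on $\norm{\dot{\sx}_n}$, so the family $(\sx_n)$ is equi-Lipschitz and equibounded on $[0,T]$. By the Arzelà–Ascoli theorem I can extract a subsequence (not relabeled) converging uniformly to some absolutely continuous $\sx$ with $\sx(0) = x$. Since the derivatives $\dot{\sx}_n$ are bounded in $L^\infty([0,T])$, passing to a further subsequence I may assume $\dot{\sx}_n \rightharpoonup w$ weakly in $L^1$, and a standard argument identifies $w = \dot{\sx}$.

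The heart of the proof --- and the step I expect to be the main obstacle --- is the \emph{closure} argument showing that the limit curve actually solves the inclusion, i.e. $\dot{\sx}(t) \in \sH(\sx(t))$ for a.e. $t$. Here both the convexity and compactness of the values and the upper semicontinuity of $\sH$ are essential. Fix $\epsilon > 0$ and a Lebesgue point $t$. Upper semicontinuity provides $\delta > 0$ with $\sH(y) \subset \sH(\sx(t)) + \epsilon B$ whenever $\norm{y - \sx(t)} \le \delta$, where $B$ is the closed unit ball; the right-hand side is closed and convex. For $n$ large and $s$ close to $t$ the relevant node value $\sx_n(t_k)$ lies within $\delta$ of $\sx(t)$, so $\dot{\sx}_n(s) \in \sH(\sx(t)) + \epsilon B$. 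Averaging over a small interval and invoking Mazur's lemma to convert the weak convergence of $\dot{\sx}_n$ into strong convergence of convex combinations, I conclude that the averaged limit, and hence $\dot{\sx}(t)$ at the Lebesgue point, belongs to the convex closed set $\sH(\sx(t)) + \epsilon B$. Letting $\epsilon \downarrow 0$ and using that $\sH(\sx(t))$ is closed gives $\dot{\sx}(t) \in \sH(\sx(t))$.

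Finally, to pass from the finite horizon to $\bbR_+$, I would apply the construction on $[0,T]$ for each $T = 1, 2, \dots$ and use a diagonal extraction, or equivalently invoke a Zorn-type maximal-continuation argument; the uniform growth bound rules out finite-time blow-up, so the maximal solution is defined on all of $\bbR_+$. This shows that $\sS_{\sH(x)}$ is nonempty for every $x \in \bbR^d$.
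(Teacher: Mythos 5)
The paper does not prove this theorem; it is imported as a known result from Aubin and Cellina \cite{aub-cel-(livre)84}, so there is no internal proof to compare yours against. Your argument is correct and is essentially the classical proof in that reference: Euler polygonal approximations, an a priori bound via the linear-growth condition and a discrete Gr\"onwall inequality, Arzel\`a--Ascoli together with weak compactness of the derivatives, then the closure (``convergence theorem'') step, which is indeed where convexity, compactness of the values and upper semicontinuity are all used (they guarantee that $\sH(\sx(t))+\epsilon B$ is closed and convex, $B$ being the closed unit ball), and finally continuation to all of $\bbR_+$. One simplification worth noting: Mazur's lemma is superfluous at the point you invoke it, since $\frac{1}{h}\int_t^{t+h}\dot{\sx}_n(s)\,\dif s=\frac{\sx_n(t+h)-\sx_n(t)}{h}$ lies in the closed convex set $\sH(\sx(t))+\epsilon B$ for $n$ large and $h$ small, and uniform convergence of $(\sx_n)$ passes this to the limit; letting $h\downarrow 0$ at a Lebesgue point then gives $\dot{\sx}(t)\in\sH(\sx(t))+\epsilon B$ directly. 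Likewise, in the last step, concatenation (solve on $[0,1]$, restart from the endpoint, and so on) is slightly cleaner than diagonal extraction, and is legitimate because your finite-horizon existence holds for every initial condition.
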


For a set-valued map $\sH$ and $\delta >0$, we denote $\sH^{\delta}(x) = \{ v \in \sH(y) :  \norm{y -x} \leq \delta\}$. In this work we will be interested in perturbed solutions of the DI associated to $\sH$.

\begin{definition}[{\cite{ben-hof-sor-05}}]
  Assume that $\sH = \sum_{i=1}^l \sH_i$, where each $\sH_i$ is a set valued map from $\bbR^d$ to $\bbR^d$. We say that an a.c. curve $\sX \in \cC(\bbR_{+} , \bbR^d)$ is a perturbed solution of the DI associated to $\sH$ if the following holds.
  \begin{enumerate}[i)]
    \item There is a function $\ssb : \bbR_+ \rightarrow \bbR_+$ and a locally integrable function $\rho : \bbR_+ \rightarrow \bbR^d$ s.t. for almost every $t\geq 0$, we have:
    \begin{equation*}
      \dot{\sX}(t) - \rho(t)\in \sum_{i=1}^{l}\sH_i^{\ssb(t)}(\sX(t)) \, .
    \end{equation*}
    \item  $\lim_{t \rightarrow + \infty } \ssb(t) = 0$ .
    \item For every $T >0$, we have: \begin{equation*}
      \lim_{t \rightarrow +\infty}\sup_{ 0 \leq h \leq T} \norm{\int_t^{t +h }\rho(u)\dif u} = 0 \, .
    \end{equation*}
  \end{enumerate}
\end{definition}
The following theorem states that, in some sense, when $t$ goes to infinity a perturbed solution shadows a solution of the corresponding DI. Its proof can be found in \cite[Proof of Th. 4.2]{ben-hof-sor-05}.
\begin{theorem}\label{th:ben_pert}
  Let $\sX$ be a perturbed solution associated with $\sH = \sum_{i=1}^l \sH_i$ and assume that each of the $\sH_i$ satisfies the assumptions of Th.~\ref{th:aub_cel}. Then the family $\{ \sX(t+ \cdot) : t \in \bbR_{+}\}$ is relatively compact (in $\cC(\bbR_{+} , \bbR^d)$) and we have:
  \begin{equation*}
  \lim_{t \rightarrow + \infty}  \bs d (\sX(t + \cdot), \sS_{\sH}) =0
  \end{equation*}
\end{theorem}
\begin{remark}
  Strictly speaking, in \cite{ben-hof-sor-05}, a perturbed solution to the DI $\dot{\sx}(t) \in \sH(\sx(t))$ was required to satisfy $\dot{\sX}(t) - \rho(t) \in \sH^{\ssb(t)}(\sX(t))$, where $\sH =\sum_{i=1}^{l}\sH_i$. Nevertheless, the proof of \cite[Th. 4.2]{ben-hof-sor-05} goes through with our definition.
\end{remark}

Given a set $\cA$, we say that a continuous function $\varphi : \bbR^d \rightarrow \bbR$ is a \emph{strict Lyapunov function} for the DI~\eqref{eq:di_def}, if for every $t>0$ and $\sx \in \sS_{\sH(x)}$, we have that $\varphi(\sx(t)) < \varphi(x)$ if $x \notin \cA$ and $\varphi(\sx(t)) \leq \varphi(x)$ otherwise. If such a $\varphi$ exists, then more can be said about the behavior of a perturbed solution.
\begin{theorem}[{\cite[Th. 3.6 and Prop. 3.27]{ben-hof-sor-05}}]\label{th:lyap_ben}
  Assume that we are in the context of Th.~\ref{th:ben_pert} and let $\varphi$ be a strict Lyapunov function for a set $\cA \subset \bbR^d$. Assume, moreover, that $\varphi(\cA)$ is of empty interior, we have:
\begin{equation*}
  \sL_{\sX} := \bigcap_{t \geq 0} \cl {\{\sX(u) : u \geq t\}} \subset A
\end{equation*}
and $\varphi$ is constant on $\sL_{\sX}$.
\end{theorem}

In this note, we will be primarily interested in two particular set valued maps.
Consider $f: \bbR^d \rightarrow \bbR$ a locally Lipschitz function. By Rademacher's theorem, $f$ is differentiable almost everywhere. The set
$\partial f(x)$ of Clarke subgradients of $f$ at $x$ is defined as follows:
\begin{equation}
  \partial f(x) = \conv \{ v : \textrm{ there is a sequence } x_i \rightarrow x \textrm{ s.t. $f$ is differentiable at $x_i$ and } \nabla f(x_i) \rightarrow v\}\, .
\end{equation}
The set $\{ x: 0 \in \partial f(x)\}$ of Clarke-critical points contains local extrema (see \cite{cla-led-ste-wol-livre98}). The map $\partial f : \bbR^d \rightrightarrows \bbR^d$ is usc and for every $x$ in $ \bbR^d$, $\partial f(x)$ is nonempty, compact and convex.\\
Given a convex set $\cX \subset \bbR^d$, the normal cone of $\cX$ is a set valued map $\cN_{\cX} : \bbR^d \rightrightarrows \bbR^d$, defined as:
\begin{equation}
  \cN_{\cX}(x) = \{ v: \scalarp{v}{y -x} \leq 0, \forall y \in \cX\} \, .
\end{equation}
For each $x \in \cX$, $\cN_{\cX}(x)$ is a closed convex subset of $\bbR^d$.

\subsection{Semialgebraic and definable functions}
An important case to which our results apply, is when $f, g$ and $\cX$ are semialgebraic, or more generally definable. We say that a set $A \subset \bbR^{N}$ is semialgebraic if it can be written as a finite union and intersection of sets of the form $\{x: P(x) \leq 0 \}$, where $P : \bbR^{N} \rightarrow \bbR$ is some polynomial. A function is semialgebraic if its graph is a semialgebraic set. While they may be nonsmooth, semialgebraic functions present strong regularity properties. Among other things, they are $C^k$ differentiable on a dense open set (for any $k \geq 0$) and stable under many elementary operations such as composition, sum, multiplication.

A generalization of this notion, which preserve the aformentioned structural properties, is the one of definableness in an o-minimal structure. While we will not mathematically define this notion here, let us mention that any semialgebraic function, as well as the exponential and the logarithm are definable (hence, also their composition). This explains their ubiquity in the optimization literature. Up to our knowledge, the first work to exploit the link between optimization and definableness was \cite{bolte2007clarke}. An interested reader can find more on definability
and its usefulness in optimization in \cite{iof08}, and more details
in \cite{van96}, \cite{coste2000introduction}, \cite{dav-dru-kak-lee-19}.


\subsection{Path differentiable functions}\label{sec:path_diff}
We say that a locally Lipschitz function $f : \bbR^d \rightarrow \bbR$ is \emph{path differentiable} if for any a.c. curve $\sx : [0,1] \rightarrow \bbR^d$, for almost every $t \in [0, 1]$:
\begin{equation}\label{eq:path_dif}
  (f \circ \sx)'(t) = \scalarp{v}{\dot{\sx}(t)} \quad \forall v \in \partial f(\sx(t)).
\end{equation}
By \cite[Proposition 2]{bolte2019conservative}, every convex, concave, semialgebraic or definable function is path differentiable. Moreover, if another function $g \colon \bbR^d \rightarrow \bbR$ is path differentiable, then $f + g$ is also path differentiable \cite[Corollary 4]{bolte2019conservative}. From a similar point of view, if $\cX$ is a convex set, then for any a.c. curve $\sx : [0,1] \rightarrow \bbR^d$, for almost every $t \in [0, 1]$:
\begin{equation}\label{eq:con_path_dif}
  \scalarp{v}{\dot{\sx}(t)} = 0 \quad  \forall v \in \cN_{\cX}(\sx(t))\, .
\end{equation}
Consider now $f,g : \bbR^d \rightarrow \bbR$ path differentiable, $\cX \subset \bbR^d$ a convex set and $\sx$ a solution to the DI~\eqref{eq:DI_prox}. Using Eq.~\eqref{eq:path_dif} and \eqref{eq:con_path_dif} and the fact that $\partial (f+g) \subset \partial f + \partial g$, we obtain
\begin{equation}\label{eq:comp_grad_lyap}
  (f + g)(\sx(t)) - (f+g)(\sx(0)) = -\int_{0}^t \norm{\dot{\sx}(u)}^2 \dif u \, .
\end{equation}
This implies that $(f+g)(\sx(t)) < (f+g)(\sx(0))$ if $\sx(0) \notin \cZ$. In other words, $f+g$ is a strict Lyapunov function for the DI~\eqref{eq:DI_prox}.

\section{Main results}\label{sec:main}
Consider $(\Omega, \Xi, \bbP)$ a probability space and $(\eta_n)$ a sequence of random variables with values in $\bbR^d$. Define $\prox^{\gamma}_{g, \cX} : \bbR^d \rightrightarrows \bbR^d$, the proximal operator for $g$ on $\cX$ with a step $\gamma$:
\begin{equation}\label{eq:prox}
  \prox^{\gamma}_{g, \cX}(x) = \argmin_{y \in \cX} \{ g(y) + \frac{1}{2\gamma} \norm{y-x}^2\}\, .
\end{equation}
We study Eq.~\eqref{eq:spgd_incl} under the following assumptions.
\begin{assumption}\label{hyp:model1}
  \-
    \begin{enumerate}[i)]
      \item\label{hyp:X_conv} The set $\cX$ is a closed convex subset of $\bbR^d$.
      \item\label{hyp:lip} The functions $f,g : \bbR^d \rightarrow \bbR$ are locally Lipschitz continuous.
      \item\label{hyp:marting}   There is a filtration $(\mcF_{n})_{n \in \bbN}$, such that $(\eta_n)$ is a martingale difference sequence adapted to it, and $x_n$ is $\mcF_n$ measurable for every $n \in \bbN$.
      \item\label{hyp:steps}  The sequence of stepsizes $(\gamma_n)$ is nonnegative and such that $\sum_{i=0}^{+ \infty} \gamma_i = + \infty$.
    \end{enumerate}
\end{assumption}
Note that if $g$ is nonconvex, $\prox^{\gamma}_{g, \cX}(x)$ is a set in $\bbR^d$. However, as soon as $x_{n+1}$ is chosen in a measurable manner (relatively to $\eta_{n+1}$ and $x_n$), $(x_n)$ will be adapted to $(\mcF_n)$. Such a choice is always possible (see e.g. \cite{dav-dru-kak-lee-19}).

By \cite[10.2 and 10.10]{roc-wets-livre98}, we can rewrite Eq.~\eqref{eq:spgd_incl} as:
\begin{equation}\label{eq:spgd_iter}
  x_{n+1} = x_n - \gamma_n (v_n + v_n^g + v_n^{\cX}) + \gamma_n \eta_{n+1} \, ,
\end{equation}
where $v_n^g \in \partial g(x_{n+1})$ and $v_n^{\cX} \in \cN_{\cX}(x_{n+1})$.

\begin{assumption}\label{hyp:model2}\-
  \begin{enumerate}[i)]
    \item\label{hyp:bound} Almost surely, $\sup_{n} \norm{x_n}  < + \infty$.
    \item\label{hyp:noise} There is $q \geq 2$ such that

    \begin{equation}
      \sum_{i=0}^{+\infty}\gamma_i^{1 + q/2} < + \infty \, ,
    \end{equation}
    and, for any compact set $\cK \subset \bbR^d$,
    \begin{equation}
      \sup_{n \in \bbN} \bbE[\norm{\eta_{n+1}}^q \1_{x_n \in \cK}| \mcF_n] < + \infty \, .
    \end{equation}
  \end{enumerate}
\end{assumption}
Assumptions of this type are standard in the field of stochastic approximation. Assumption~\ref{hyp:model2}-\eqref{hyp:bound} prevent the algorithm to diverge. Note that it is superfluous if $\cX$ is compact. Otherwise it can be obtained by a proper choice of the regularizer $g$ (see \cite{duchi2018stochastic}).

Let $\tau_{n} = \sum_{i=1}^n \gamma_i$ be the discrete time of the algorithm. Define the linearly interpolated process $\sX \in \cC( \bbR_{+}, \bbR^d)$ by:
\begin{equation*}
  \sX(t)  = x_n + \frac{t - \tau_n}{\gamma_{n+1}} (x_{n+1}-x_n) \quad \textrm{for}  \quad \tau_n \leq t < \tau_{n+1} \, .
\end{equation*}
Following \cite{ben-hof-sor-05} we will show that $\sX$ is a perturbed solution of the DI~\eqref{eq:DI_prox}.
The next two assumptions ensure us that $f+g$ will be a Lyapunov function for the DI~\eqref{eq:DI_prox}.
\begin{assumption}\label{hyp:path_dif}
  The functions $f$ and $g$ are path differentiable.
\end{assumption}
\begin{assumption}\label{hyp:Sard}
  The set of Clarke critical values $ \{f(x) + g(x) : x \in \cZ\}$ has an empty interior.
\end{assumption}

Assumption~\ref{hyp:Sard} is a classical Sard-type condition. It ensures the fact that if $\sx$ is a solution to the DI~\eqref{eq:DI_prox}, with $\sx(0) \in \cZ$, then $\sx$ is constant. As established in~\cite{bolte2007clarke}, it is satisfied as soon as $f,g$ and $\cX$ are definable.

The next two propositions are not new and can be found in one way or another in e.g. \cite{dav-dru-kak-lee-19}, \cite{bolte2019conservative}, \cite{maj-mia-mou18}, \cite{bol_zer_pau20}. Nevertheless, since our set of assumptions is slightly different and their proof is a simple application of Section~\ref{sec:pert_di}, for completeness, we include it in Section~\ref{sec:proof_apt}.
\begin{proposition}
  \label{pr:apt}
  Let Assumptions~\ref{hyp:model1} and \ref{hyp:model2} hold, then the family $(\sX(t + \cdot))_{t \geq 0}$ is relatively compact. Moreover, if a sequence $t_n \rightarrow + \infty$ and $\sx \in \cC(\bbR_{+}, \bbR^d)$ is such that $\boldsymbol d(\sX(t_n + \cdot), \sx) \rightarrow 0$, then $\sx$ is a solution to the DI~\eqref{eq:DI_prox}.
\end{proposition}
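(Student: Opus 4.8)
The plan is to reduce both assertions to a single statement: the interpolated process $\sX$ is a bounded APT of the differential inclusion~\eqref{eq:DI_prox}. Indeed, once this is known, the result of \cite[Th. 4.1]{ben-hof-sor-05} quoted after the definition of an APT in Section~\ref{sec:dif_incl} gives at once that $(\sX(t+\cdot))_{t \geq 0}$ is relatively compact and that every limit point of this family solves~\eqref{eq:DI_prox}, which are exactly the two claims. Boundedness of $\sX$ is immediate from Assumption~\ref{hyp:model2}-\eqref{hyp:bound}: almost surely the iterates lie in a compact set $\cK$, and $\sX$ is their linear interpolation; on $\cK$ the functions $f,g$ are Lipschitz with a constant $L$ and $\partial f,\partial g$ are bounded. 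One preliminary point deserves care: the map $\sH := -\partial f - \partial g - \cN_{\cX}$ has the convex values and upper semicontinuity required by Theorem~\ref{th:aub_cel}, but $\cN_{\cX}$ is cone-valued, hence unbounded, so $\sH$ fails the compactness and linear growth conditions globally. Since only trajectories contained in $\cK$ are relevant, I would modify $f,g$ outside a large ball and truncate $\sH$ so that it has compact values and linear growth while remaining unchanged on $\cK$, which does not affect the solutions we track.

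The first substantial step is a displacement estimate showing that the normal-cone component stays bounded. Writing $u_n = x_n - \gamma_n v_n + \gamma_n \eta_{n+1}$, the point $x_{n+1}$ minimizes $y \mapsto g(y) + \frac{1}{2\gamma_n}\norm{y - u_n}^2$ over $\cX$; comparing its value with that at the feasible point $x_n \in \cX$ (valid for $n\geq 1$) and using $|g(x_n) - g(x_{n+1})| \leq L \norm{x_n - x_{n+1}}$ yields, after solving the resulting quadratic inequality, a bound of the form $\norm{x_{n+1} - x_n} \leq C \gamma_n (1 + \norm{\eta_{n+1}})$. Consequently the effective drift $w_n := (x_n - x_{n+1})/\gamma_n + \eta_{n+1} = v_n + v_n^g + v_n^{\cX}$ satisfies $\norm{w_n} \leq C(1 + \norm{\eta_{n+1}})$, so that the normal-cone term $v_n^{\cX}$ is controlled; this is the reason the unboundedness of $\cN_{\cX}$ is harmless. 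Since $\sum_i \gamma_i^{1+q/2} < \infty$ forces $\gamma_n \to 0$, this also gives $\norm{x_{n+1} - x_n} \to 0$ wherever the noise contribution is controlled.

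It then remains to verify the two hypotheses of the stochastic-approximation criterion of \cite{ben-hof-sor-05} that make a bounded interpolated process an APT. First, the drift is an almost-selection of $\sH$ along the iterates: by construction $v_n \in \partial f(x_n)$ exactly, while $v_n^g \in \partial g(x_{n+1})$ and $v_n^{\cX} \in \cN_{\cX}(x_{n+1})$ with $\norm{x_{n+1} - x_n} \to 0$, so the upper semicontinuity of $\partial g$ and of $\cN_{\cX}$ on $\cK$ places $-w_n$ within distance $\delta_n$ of $\sH(x_n)$ for some $\delta_n \to 0$. Second, and this is the heart of the proof, the rescaled noise must vanish uniformly over finite time windows: for each $T>0$ one needs
\begin{equation*}
  \sup\Bigl\{ \norm{\sum_{i=n}^{k-1}\gamma_i \eta_{i+1}} : \tau_k \leq \tau_n + T \Bigr\} \xrightarrow[n \rightarrow + \infty]{} 0 \quad \text{a.s.}
\end{equation*}
Here I would use that $(\eta_n)$ is a martingale difference sequence adapted to $(\mcF_n)$ (Assumption~\ref{hyp:model1}-\eqref{hyp:marting}), so that the partial sums form a martingale, and apply a Doob/Burkholder--Davis--Gundy maximal inequality together with the $q$-th moment bound and the summability $\sum_i \gamma_i^{1+q/2} < \infty$ of Assumption~\ref{hyp:model2}-\eqref{hyp:noise}; estimating the $q$-th moment of each windowed block and invoking the Borel--Cantelli lemma delivers the almost-sure uniform convergence.

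With these two facts in hand, the cited criterion of \cite{ben-hof-sor-05} shows that $\sX$ is an APT, and since boundedness was already noted, $\sX$ is a bounded APT and the proposition follows from \cite[Th. 4.1]{ben-hof-sor-05}. I expect the main obstacle to be the noise-control estimate of the last paragraph: it is where the precise stepsize--moment trade-off is consumed, through the combination of a martingale maximal inequality, the blockwise moment bound, and Borel--Cantelli. A secondary technical nuisance, already flagged, is fitting the unbounded normal-cone field into the compact-valued, linear-growth framework required by Theorem~\ref{th:aub_cel} and by the APT machinery.
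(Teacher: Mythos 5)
Your overall strategy coincides with the paper's: truncate $f$, $g$ and the normal cone so as to fit the framework of Theorem~\ref{th:aub_cel}, show that $\sX$ is a perturbed solution of the truncated inclusion (window-wise martingale noise control plus the prox comparison controlling $\norm{x_{n+1}-x_n}$), and invoke \cite[Th. 4.1 and 4.2]{ben-hof-sor-05}. Your noise paragraph is exactly the paper's Lemma~\ref{lm:noise_conv}; the only point to make explicit there is the replacement of $\eta_{n+1}$ by $\eta_{n+1}\1_{\norm{x_n}\leq C}$, which converts the conditional, localized moment bound of Assumption~\ref{hyp:model2} into the unconditional one needed for the maximal-inequality/Borel--Cantelli argument (or for citing \cite[Prop. 4.2]{ben-(cours)99} directly, as the paper does). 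Your prox displacement estimate likewise matches the paper's.

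The gap is in your third paragraph. Your (correct) estimate gives only $\norm{v_n^{\cX}}\leq C(1+\norm{\eta_{n+1}})$, and this cannot be improved to a bound uniform in $n$: take $d=1$, $\cX=(-\infty,0]$, $g=0$, $f(x)=-x$ and i.i.d.\ Gaussian noise; all assumptions hold, the constraint is active ($x_n=0$) infinitely often (otherwise $\sum\gamma_i=+\infty$ would force $x_n\to+\infty$), and at those steps $v_n^{\cX}=\max(1+\eta_{n+1},0)$, so $\sup_n\norm{v_n^{\cX}}=+\infty$ almost surely. Consequently, at infinitely many steps $-w_n=-(v_n+v_n^g+v_n^{\cX})$ lies in no inflation $\sH^{\delta_n}(x_n)$ of a compact-valued truncation of $-\partial f-\partial g-\cN_{\cX}$, the ``almost-selection'' property you assert fails, and the perturbed-solution/APT machinery of \cite{ben-hof-sor-05} --- whose standing hypotheses require the non-noise part of the velocity to take values in a compact-valued map with linear growth --- cannot be applied as written. (The issue disappears when $\cX=\bbR^d$, where the prox displacement is at most $2\gamma_n L_g$, or when the noise is bounded.) Closing it needs an extra idea, e.g.\ write $v_n^{\cX}=\hat v_n^{\cX}+r_n$ with $\hat v_n^{\cX}$ the radial truncation at level $c$ (still an element of the cone $\cN_{\cX}(x_{n+1})$) and $r_n$ supported on $\{\norm{\eta_{n+1}}\gtrsim c\}$, fold $\gamma_n r_n$ into the perturbation $\rho$, and control $\sum_{i=n}^{N(T,n)}\gamma_i\norm{r_i}$ over windows; the moment bound only yields an estimate of order $Tc^{1-q}$, uniform in $n$, so a final limit $c\to+\infty$ over truncation levels is then required to conclude that limit points solve \eqref{eq:DI_prox}. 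Be aware that the paper's own Lemma~\ref{lm:alt_di} sidesteps this by asserting $\sup_n\norm{v_n^{\cX}}<C_2$ from the boundedness of $(x_{n+1/2})$; your displacement estimate shows exactly why that inference is delicate (dividing by $\gamma_n\to 0$ reintroduces $\norm{\eta_{n+1}}$), so your proposal has in fact located, without resolving, the one step where the published argument is itself not airtight.
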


\begin{proposition}
  \label{pr:x_conv}
  Under Assumptions~\ref{hyp:model1}--\ref{hyp:Sard}, the set $\acc \{x_n\}$ is included in $\cZ$ and $f + g$ is constant on $\acc \{ x_n\}$.
\end{proposition}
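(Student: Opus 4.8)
The plan is to obtain both assertions at once as a direct application of the general Lyapunov result Theorem~\ref{th:lyap_ben} to the differential inclusion~\eqref{eq:DI_prox}, whose right-hand side I write $\sH = -\partial f - \partial g - \cN_{\cX}$, with Lyapunov candidate $\varphi = f+g$ and target set $\cA = \cZ$. By Proposition~\ref{pr:apt} the interpolated process $\sX$ is an APT of~\eqref{eq:DI_prox}, and it is bounded since by Assumption~\ref{hyp:model2}-\eqref{hyp:bound} the iterates almost surely lie in a compact set. Thus, once the hypotheses of Theorem~\ref{th:lyap_ben} are in force, it will yield directly that the limit set $\sL_{\sX}$ is contained in $\cZ$ and that $f+g$ is constant on $\sL_{\sX}$. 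So the whole task reduces to verifying the three inputs of that theorem: the regularity of $\sH$ demanded by Theorem~\ref{th:aub_cel}, the strict Lyapunov property of $\varphi$ for $\cZ$, and the empty interior of $\varphi(\cZ)$.

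Two of these inputs are already available. The strict Lyapunov property is precisely the content of Eq.~\eqref{eq:comp_grad_lyap} and the line following it in Section~\ref{sec:path_diff}: under Assumption~\ref{hyp:path_dif}, along any solution $\sx$ one has $(f+g)(\sx(t)) - (f+g)(\sx(0)) = -\int_0^t \norm{\dot{\sx}(u)}^2 \dif u \leq 0$, and the inequality is strict whenever $\sx(0) \notin \cZ$ (a trajectory constant on some $[0,t]$ would force $0 \in \sH(\sx(0))$, i.e.\ $\sx(0) \in \cZ$). The empty interior of $\varphi(\cZ) = \{f(x)+g(x) : x \in \cZ\}$ is exactly the Sard-type Assumption~\ref{hyp:Sard}.

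Granting the regularity hypothesis, Theorem~\ref{th:lyap_ben} then gives $\sL_{\sX} \subset \cZ$ with $f+g$ constant on $\sL_{\sX}$, and it only remains to pass from $\sL_{\sX}$ to $\acc\{x_n\}$. This last step is immediate and requires nothing extra: since $x_n = \sX(\tau_n)$ with $\tau_n \to +\infty$, every accumulation point of $(x_n)$ belongs to $\cl\{\sX(u) : u \geq t\}$ for all $t \geq 0$, hence to $\sL_{\sX}$. Therefore $\acc\{x_n\} \subset \sL_{\sX} \subset \cZ$ and $f+g$ is constant on $\acc\{x_n\}$.

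The one point that needs genuine care — and the main obstacle — is the regularity required by Theorem~\ref{th:aub_cel}. Whereas $\partial f$ and $\partial g$ are usc with nonempty compact convex values, the normal cone $\cN_{\cX}$ is unbounded, so $\sH$ is neither compact-valued nor subject to the linear growth bound $\sup\{\norm{v} : v \in \sH(x)\} \leq C(1+\norm{x})$; Theorem~\ref{th:aub_cel}, and hence Theorem~\ref{th:lyap_ben}, does not literally apply. The fix I would use again exploits the almost sure boundedness from Assumption~\ref{hyp:model2}-\eqref{hyp:bound}: on a fixed sample path the whole trajectory $\sX$ stays in a compact set $\cK$, so I would replace $\sH$ by a globally usc map with compact convex values and linear growth that coincides with $\sH$ on a neighborhood of $\cK$ (truncating the normal-cone component outside a large ball). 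Solutions started in $\cK$ stay bounded on finite time horizons and never reach the truncation region, so they agree with genuine $\sH$-solutions there, $\sX$ remains an APT of the modified inclusion, and $f+g$ remains a strict Lyapunov function; Theorem~\ref{th:lyap_ben} then applies to the modified map and the conclusion above is unaffected.
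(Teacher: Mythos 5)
Your overall strategy --- truncate the dynamics so that Theorem~\ref{th:aub_cel} applies, then invoke Theorem~\ref{th:lyap_ben} with $\varphi = f+g$ and $\cA = \cZ$ --- is exactly the paper's (this is the role of Lemma~\ref{lm:alt_di}), and your verification of the strict Lyapunov property, the use of Assumption~\ref{hyp:Sard} for the empty-interior hypothesis, and the passage from $\sL_{\sX}$ to $\acc\{x_n\}$ are all fine. The gap is in the one step you yourself flag as the main obstacle. You propose a usc map with compact convex values and linear growth that \emph{coincides with} $\sH = -\partial f - \partial g - \cN_{\cX}$ on a neighborhood of the compact set $\cK$ containing the trajectory, obtained by ``truncating the normal-cone component outside a large ball''. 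No such map exists in the cases that matter: $\cN_{\cX}(x)$ is an unbounded cone at every boundary point of $\cX$ where it is nontrivial, so any map coinciding with $\sH$ on a neighborhood of $\cK$ fails to be compact-valued as soon as $\cK$ meets $\partial \cX$ --- which is precisely the relevant situation, since the iterates lie in $\cX$, the vectors $v_n^{\cX} \in \cN_{\cX}(x_{n+1})$ are nonzero exactly when the constraint is active, and constrained critical points sit on $\partial\cX$. The unboundedness of $\sH$ is not located far away in the state space, in a ``truncation region'' that solutions never reach; it sits in the fibers over points the trajectory visits, so truncating outside a large ball in $x$ fixes nothing.

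The paper's fix (Lemma~\ref{lm:alt_di}) truncates in the \emph{value} space: $\cN_{\cX}$ is replaced by a map $\widetilde{\cN}_{\cX}$ whose values are bounded in norm by a constant, and $f,g$ are replaced by $\tilde f = f\circ \Pi_{C+1}$, $\tilde g = g \circ \Pi_{C+1}$ to make them globally Lipschitz. The price is that the modified map no longer agrees with $\sH$ at boundary points, so the transfers you treat as automatic become the real content of the proof: (i) one must check that the iterates still satisfy the recursion with respect to the modified sets, i.e.\ that $v_n^{\cX}$ is bounded --- this uses the boundedness of $x_{n+1/2}$, hence Lemma~\ref{lm:noise_conv} --- and then show directly that $\sX$ is a perturbed solution, hence an APT, of the modified DI~\eqref{eq:di_alter}; your appeal to Proposition~\ref{pr:apt} only makes $\sX$ an APT of the original DI~\eqref{eq:DI_prox}, and this does not pass to the modified inclusion for free, since a limit solution of~\eqref{eq:DI_prox} need not a priori admit a normal-cone selection within the truncation bound; and (ii) one must prove that bounded solutions of~\eqref{eq:di_alter} solve~\eqref{eq:DI_prox} and that $\widetilde{\cZ} \cap B(0,C) = \cZ \cap B(0,C)$, which is what allows the conclusion of Theorem~\ref{th:lyap_ben}, applied to $\tilde f + \tilde g$ and $\widetilde{\cZ}$, to be restated in terms of $f+g$ and $\cZ$. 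As written, your argument skips these verifications, and the construction it relies on instead cannot be carried out.
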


The next theorem  tells us that even if $\acc \{ x_n \}$ is not a single point, the time that it takes to $(x_n)$ to go from one accumulation point to another goes to infinity. This is an extension of \cite[Th. 6.i), Th. 7.i)]{bol_zer_pau20}, to the best of our knowledge this result is new in a stochastic and proximal setting.
\begin{theorem}\label{th:two_sol_T}
  Let Assumptions~\ref{hyp:model1}--\ref{hyp:Sard} hold. Let $x,y$ be two distinct points in $\acc \{ x_n\}$. Consider two sequences $n_i, n_j $, with $n_i \leq n_j $, such that $x_{n_i} \rightarrow x$ and $x_{n_j}\rightarrow y$. Then $\tau_{n_j} - \tau_{n_i} \rightarrow + \infty$.

  Under Assumptions~\ref{hyp:model1}--\ref{hyp:path_dif}, the same result is true if $(f+g)(x) \leq (f+g)(y)$.
\end{theorem}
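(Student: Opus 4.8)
The plan is to argue by contradiction: out of a subsequence along which the elapsed time $\tau_{n_j}-\tau_{n_i}$ stays bounded, I will build a solution of the differential inclusion~\eqref{eq:DI_prox} joining $x$ to $y$ in finite time, and then invoke the Lyapounov structure of $f+g$ to force this solution to be constant, contradicting $x\neq y$.

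Suppose $\tau_{n_j}-\tau_{n_i}\not\to+\infty$. As $n_i\leq n_j$ the quantities $\tau_{n_j}-\tau_{n_i}$ are nonnegative, so along a subsequence (not relabelled) we have $\tau_{n_j}-\tau_{n_i}\to T$ for some finite $T\geq 0$. Since $n_i\to+\infty$, Assumption~\ref{hyp:model1}-\eqref{hyp:steps} gives $\tau_{n_i}\to+\infty$, so Proposition~\ref{pr:apt} applies: after passing to a further subsequence, $\bs d(\sX(\tau_{n_i}+\cdot),\sx)\to 0$ for some $\sx\in\cC(\bbR_+,\bbR^d)$ that is a solution of~\eqref{eq:DI_prox}. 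Uniform convergence on compact intervals together with $\sX(\tau_{n_i})=x_{n_i}\to x$ yields $\sx(0)=x$.

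The technical heart is to pin down the other endpoint. Set $s_i=\tau_{n_j}-\tau_{n_i}\to T$ and note that $\sX(\tau_{n_i}+s_i)=\sX(\tau_{n_j})=x_{n_j}$, so
\[
  \norm{\sx(T)-y}\leq\norm{\sx(T)-\sx(s_i)}+\norm{\sx(s_i)-\sX(\tau_{n_i}+s_i)}+\norm{x_{n_j}-y}.
\]
For large $i$ we have $s_i\leq T+1$; the first term then tends to $0$ by continuity of $\sx$ at $T$, the second by uniform convergence of $\sX(\tau_{n_i}+\cdot)$ to $\sx$ on $[0,T+1]$, and the third by $x_{n_j}\to y$. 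Hence $\sx(T)=y$. Coupling the scalar convergence $s_i\to T$ with the uniform-on-compacts convergence of the shifted interpolations is the only delicate point of the argument.

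It remains to contradict $x\neq y$. If $T=0$ then $x=\sx(0)=\sx(T)=y$, so assume $T>0$. The energy identity~\eqref{eq:comp_grad_lyap} gives
\[
  (f+g)(y)-(f+g)(x)=-\int_0^T\norm{\dot{\sx}(u)}^2 \dif u\leq 0.
\]
Under Assumptions~\ref{hyp:model1}--\ref{hyp:Sard}, Proposition~\ref{pr:x_conv} shows $f+g$ is constant on $\acc\{x_n\}$, so $(f+g)(x)=(f+g)(y)$ (alternatively, $x\in\cZ$, and Assumption~\ref{hyp:Sard} already makes $\sx$ constant directly); under Assumptions~\ref{hyp:model1}--\ref{hyp:path_dif} the hypothesis $(f+g)(x)\leq(f+g)(y)$ together with the displayed inequality again forces $(f+g)(x)=(f+g)(y)$. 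In either regime $\int_0^T\norm{\dot{\sx}(u)}^2 \dif u=0$, hence $\dot{\sx}=0$ almost everywhere on $[0,T]$, $\sx$ is constant, and $x=\sx(0)=\sx(T)=y$, a contradiction. Therefore $\tau_{n_j}-\tau_{n_i}\to+\infty$.
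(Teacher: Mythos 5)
Your proof is correct and takes essentially the same route as the paper's: argue by contradiction, use Proposition~\ref{pr:apt} to extract a limit solution $\sx$ of the DI~\eqref{eq:DI_prox} joining $x$ to $y$ in finite time, and then use the energy identity~\eqref{eq:comp_grad_lyap} together with Proposition~\ref{pr:x_conv} (in the Sard case) or the hypothesis $(f+g)(x)\leq(f+g)(y)$ (in the other case) to force $\sx$ to be constant, contradicting $x\neq y$. The only difference is organizational: the paper packages the constancy step as Lemma~\ref{lm:sol_const} and is terser about identifying the endpoints, whereas you inline that lemma and spell out the convergence $\sx(T)=y$ explicitly.
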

As it is shown in \cite{zer20}, it is possible that $\acc \{ x_n\}$ is not reduced to a unique point. Nevertheless, Th.~\ref{th:two_sol_T} implies that the ``nonconvergence" happens in a very slow manner. Asymptotically, the time spent by the algorithm to move from one accumulation point to another goes to infinity.

We now investigate the question of oscillations. Given $U, V$ two open sets, such that $\cl{U} \subset V$, we will call $I = [n_1, n_2]$ a maximal interval related to $U,V$ if the set $X_{n_1}^{n_2} := \{ x_{n_1}, x_{n_1 + 1}, \dots, x_{n_2}\}$ is such that $X_{n_1}^{n_2} \subset V$, $X_{n_1}^{n_2} \cap U \neq \emptyset$ and either $x_{n_1 -1}$ or $x_{n_2 + 1}$ is not in $V$.
 The next two results are an extension of \cite[Th. 7]{bol_zer_pau20} to a stochastic setting.

\begin{theorem}[Long intervals]\label{th:long_i}
  Let Assumptions~\ref{hyp:model1}-\ref{hyp:Sard} hold. Consider $x \in \acc\{ x_n\}$ and $U,V$ two neighborhoods of $x$ such that $\cl{U} \subset V$. For $i \in\bbN$, denote $I_i = [n_{i_1}, n_{i_2}]$ a sequence of distinct maximal intervals related to $U, V$. Then, either one of $I_i$ is unbounded or $\tau_{n_{i_2}} - \tau_{n_{i_1}} \rightarrow + \infty$.
\end{theorem}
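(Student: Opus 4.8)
The plan is to argue by contradiction, and the key observation I would exploit is that \emph{any} limiting trajectory of the interpolated process $\sX$ obtained by shifting to $+\infty$ must have its entire range inside the critical set $\acc\{x_n\}$, on which $f+g$ is constant by Proposition~\ref{pr:x_conv}; the Lyapunov identity~\eqref{eq:comp_grad_lyap} then forces such a trajectory to be \emph{constant}. A constant trajectory, however, cannot both meet $U$ and leave $V$, which is what a short maximal interval would demand. So suppose the conclusion fails: no $I_i$ is unbounded and $\tau_{n_{2i}}-\tau_{n_{1i}}$ does not diverge. Then along a subsequence (not relabeled) every $I_i$ is a finite interval of indices and $\tau_{n_{2i}}-\tau_{n_{1i}}\leq T$ for some fixed $T$. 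As the $I_i$ are distinct and each occupies a finite range of indices, only finitely many can lie in any fixed $[0,N]$, so $n_{1i}\to+\infty$; since $\sum_i\gamma_i=+\infty$ this gives $t_i:=\tau_{n_{1i}}\to+\infty$.

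I would first recall the facts established in the proof of Proposition~\ref{pr:apt}: the increments $\norm{x_{n+1}-x_n}$ tend to $0$ almost surely, whence $\sL_{\sX}=\acc\{x_n\}$, and by Proposition~\ref{pr:x_conv} one has $f+g\equiv c$ on $\acc\{x_n\}$ for a constant $c$. By the relative compactness in Proposition~\ref{pr:apt}, passing to a further subsequence I may assume $\bs d(\sX(t_i+\cdot),\sx)\to 0$ for some solution $\sx$ of the DI~\eqref{eq:DI_prox}, the convergence being uniform on $[0,T+1]$. For every $h\geq 0$, $\sx(h)=\lim_i\sX(t_i+h)$ with $t_i+h\to+\infty$; since the increments vanish, $\sx(h)\in\acc\{x_n\}$, so $(f+g)(\sx(h))=c$ for all $h$. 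Then~\eqref{eq:comp_grad_lyap} yields $0=(f+g)(\sx(h))-(f+g)(\sx(0))=-\int_0^h\norm{\dot{\sx}(u)}^2\dif u$, forcing $\dot{\sx}\equiv 0$; hence $\sx$ is constant, equal to $x_0':=\sx(0)=\lim_i x_{n_{1i}}$.

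To reach the contradiction I would use that each $I_i$ meets $U$: choose $m_i\in[n_{1i},n_{2i}]$ with $x_{m_i}\in U$, so its relative time $\tau_{m_i}-t_i$ lies in $[0,T]$ and, along a subsequence, converges to some $s\in[0,T]$; then $x_{m_i}=\sX(t_i+(\tau_{m_i}-t_i))\to\sx(s)=x_0'$, giving $x_0'\in\cl U$. On the other hand, maximality of $I_i$ means that either $x_{n_{2i}+1}\notin V$ or $x_{n_{1i}-1}\notin V$. In the first case $\tau_{n_{2i}+1}-t_i\to T''\in[0,T]$ (using $\gamma_{n_{2i}+1}\to 0$) and $x_{n_{2i}+1}\to\sx(T'')=x_0'$; in the second case $x_{n_{1i}-1}\to x_0'$ as well, because $\norm{x_{n_{1i}}-x_{n_{1i}-1}}\to 0$ and $x_{n_{1i}}\to x_0'$. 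Either way the exiting iterate lies in the closed set $\bbR^d\setminus V$, so $x_0'\notin V$, contradicting $x_0'\in\cl U\subset V$.

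The heart of the argument, and the step I expect to need the most care, is the constancy of the limit trajectory $\sx$: it rests on $\sx$ having range in $\acc\{x_n\}$ (which is exactly where vanishing increments, i.e. $\sL_{\sX}=\acc\{x_n\}$, is used) together with the Sard/Lyapunov mechanism of~\eqref{eq:comp_grad_lyap} that upgrades ``constant value of $f+g$'' to ``constant curve.'' The remaining difficulty is purely bookkeeping of the time shifts, in particular handling both alternatives in the definition of a maximal interval and ensuring that the entry time $\tau_{m_i}-t_i$ and the exit time stay inside the fixed window $[0,T]$, so that the uniform convergence on $[0,T+1]$ may be applied to pass to the limit.
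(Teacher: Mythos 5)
Your proof is correct and rests on essentially the same mechanism as the paper's: the paper simply invokes Theorem~\ref{th:two_sol_T} (pick $n_i \in I_i$ with $x_{n_i} \in U$, note $x_{n_{2i}+1} \in V^c$, extract limits $y_1 \neq y_2$, and use $\tau_{n_{2i}} - \tau_{n_{1i}} \geq \tau_{n_{2i}+1} - \tau_{n_i} - \gamma_{n_{2i}+1} \to +\infty$), whereas you inline the content of that theorem and of Lemma~\ref{lm:sol_const}, namely that limits of time-shifted interpolants are constant solutions via the Sard/Lyapunov argument. A minor point in your favor is that you explicitly treat both maximality alternatives ($x_{n_{1i}-1} \notin V$ or $x_{n_{2i}+1} \notin V$), which the paper's short proof glosses over.
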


\begin{theorem}[Oscillation compensation]\label{th:main}
  Let Assumptions~\ref{hyp:model1}-\ref{hyp:Sard} hold, and fix
 $U$, $V$  and $I_i$ as in Th.~\ref{th:long_i}. Denote $A = \bigcup I_i$, then
    \begin{equation}\label{eq:osc_app}
      \frac{\sum_{i=1}^n \gamma_{i} (v_i + v_i^g + v_i^{\cX}) \1_{A}(x_i)}{ \sum_{i=1}^n \gamma_{i} \1_{A}(x_i)} \xrightarrow[n \rightarrow + \infty]{} 0 \, .    \end{equation}
\end{theorem}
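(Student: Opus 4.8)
The plan is to exploit the telescoping structure of the iteration over each maximal interval, bounding the accumulated drift by the displacement of the interval's endpoints plus a noise contribution, and then to combine these per-interval estimates through a Stolz--Cesàro summation, using Theorem~\ref{th:long_i} to guarantee that the time spent in each interval diverges.

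First I would work on the a.s. event where $R := \sup_n \norm{x_n} < +\infty$ (Assumption~\ref{hyp:model2}-\eqref{hyp:bound}) and where the martingale $M_n := \sum_{k=0}^{n-1} \gamma_k \eta_{k+1}$ converges; this convergence follows from Assumption~\ref{hyp:model2}-\eqref{hyp:noise} by standard martingale arguments and is precisely the ingredient underlying Proposition~\ref{pr:apt}. Summing Eq.~\eqref{eq:spgd_iter} over a maximal interval $I_i = [n_{1i}, n_{2i}]$ yields the identity
\begin{equation*}
  \sum_{k = n_{1i}}^{n_{2i}} \gamma_k (v_k + v_k^g + v_k^{\cX}) = x_{n_{1i}} - x_{n_{2i}+1} + \big( M_{n_{2i}+1} - M_{n_{1i}} \big)\, .
\end{equation*}
Since every iterate lies in the ball of radius $R$, the endpoint difference is bounded by $2R$ uniformly in $i$. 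Moreover, distinct maximal intervals are disjoint (overlapping maximal intervals would merge into a larger one still contained in $V$ and meeting $U$), so only finitely many can lie in any bounded range of $\bbN$, forcing $n_{1i} \to +\infty$; the convergence of $M_n$ then gives $M_{n_{2i}+1} - M_{n_{1i}} \to 0$. Writing $T_i := \sum_{k \in I_i} \gamma_k$ and $c_i := 2R + \norm{M_{n_{2i}+1} - M_{n_{1i}}}$, the norm of the per-interval drift is at most $c_i$, the $c_i$ are bounded, and $T_i \geq \tau_{n_{2i}} - \tau_{n_{1i}} \to +\infty$ by Theorem~\ref{th:long_i} (in the case where no interval is unbounded), whence $c_i / T_i \to 0$.

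Next I would assemble the global ratio. For fixed $n$, the set $\{1, \dots, n\} \cap A$ decomposes into finitely many complete intervals $I_1, \dots, I_{m(n)}$ together with at most one incomplete interval, whose drift-norm is bounded by $2R$ plus a tail of $M$, hence by a constant $C_0$, while its time contribution to the denominator is nonnegative. Therefore
\begin{equation*}
  \frac{\norm{\sum_{i=1}^n \gamma_i (v_i + v_i^g + v_i^{\cX}) \1_A(x_i)}}{\sum_{i=1}^n \gamma_i \1_A(x_i)} \leq \frac{\sum_{j=1}^{m(n)} c_j + C_0}{\sum_{j=1}^{m(n)} T_j}\, .
\end{equation*}
Because $\sum_j T_j = +\infty$ and $c_j / T_j \to 0$, the Stolz--Cesàro theorem gives $\sum_{j \leq m} c_j / \sum_{j \leq m} T_j \to 0$, while $C_0 / \sum_j T_j \to 0$ as well, which proves Eq.~\eqref{eq:osc_app}. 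The remaining case, where some $I_{i_0}$ is unbounded, is simpler: then $x_n \in V$ for all large $n$, so the numerator over $[n_{1 i_0}, n]$ is bounded by $2R$ plus a convergent-tail noise term (hence bounded), while the denominator diverges, giving the same conclusion.

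The main obstacle I anticipate is making the noise control uniform across intervals: one must ensure not merely that each per-interval noise term is finite but that it vanishes as $i \to +\infty$, which is exactly why the a.s. \emph{convergence} of $M_n$—rather than a bare moment bound—is indispensable. The residual difficulty is bookkeeping: correctly isolating the incomplete interval and unifying the bounded and unbounded alternatives of Theorem~\ref{th:long_i} so that divergence of the denominator is guaranteed throughout.
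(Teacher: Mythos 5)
Your architecture (telescope the iteration over each maximal interval, then sum the per-interval estimates with Stolz--Ces\`aro) is a genuinely different route from the paper's, which instead cuts each interval into windows of time-length $T$ and invokes Lemma~\ref{lm:int_zer} (itself resting on the Sard condition via Lemma~\ref{lm:sol_const}) to make the drift over every such bounded window vanish. However, your proof has a genuine gap: the a.s.\ convergence of the martingale $M_n = \sum_{k=0}^{n-1}\gamma_k\eta_{k+1}$ does \emph{not} follow from the paper's assumptions, and is in general false under them. Assumption~\ref{hyp:model2}-\eqref{hyp:noise} only requires $\sum_i \gamma_i^{1+q/2} < +\infty$ for some $q \geq 2$, which for $q>2$ is strictly weaker than $\sum_i \gamma_i^2 < +\infty$. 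Take $\gamma_k = k^{-2/5}$, $q=4$ and $(\eta_k)$ i.i.d.\ Rademacher: all assumptions hold ($\sum_k \gamma_k^3<+\infty$, bounded noise, $\sum_k\gamma_k=+\infty$), yet $\sum_k \gamma_k^2 = +\infty$, so by Kolmogorov's three-series theorem $M_n$ diverges a.s. (Your claim is fine in the special case $q=2$, by Doob/Chow.) What the assumptions actually give---and what underlies Prop.~\ref{pr:apt}---is the weaker Kushner--Clark property of Lemma~\ref{lm:noise_conv}: noise sums vanish only over windows of \emph{bounded} time-length. Since your constants $c_i$ involve noise increments over intervals whose time-length $T_i$ diverges, their boundedness is unjustified, and the same objection hits your constant $C_0$ for the incomplete interval and your treatment of the unbounded-interval case.

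The gap is repairable, and the repair shows exactly where the paper's chunking device is unavoidable: split each $I_i$ into $K_i \leq T_i/T + 1$ windows of time-length at most $T$, so that Lemma~\ref{lm:noise_conv} bounds the noise increment over $I_i$ by $K_i \varepsilon_i$ with $\varepsilon_i \to 0$ (the sup over windows starting after $n_{1i}$, and $n_{1i}\to+\infty$ since the intervals are distinct). Then $c_i \leq 2R + K_i\varepsilon_i$, hence $c_i/T_i \leq 2R/T_i + \varepsilon_i/T + \varepsilon_i/T_i \rightarrow 0$, and your Stolz--Ces\`aro step goes through unchanged. With this fix your argument is correct and remains an interesting alternative: you control only the \emph{noise} over bounded windows and get the drift bound for free by telescoping against the bounded iterates, whereas the paper controls the full drift over bounded windows through Lemma~\ref{lm:int_zer}; both arguments ultimately lean on Th.~\ref{th:long_i} to make the per-interval contribution to the denominator diverge.
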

Th.~\ref{th:main} gives an intuitive explanation of why Th.~\ref{th:two_sol_T} holds. Indeed, while the drift coming from one iteration $v_i + v_i^g + v_i^{\cX}$ might not go to zero (as it happens for such a simple example as $f(x) = \norm{x}$, $g =0$ and $\cX = \bbR^d$), on average, it compensates itself. Th.~\ref{th:two_sol_T} and \ref{th:main} suggest that the algorithm oscillates around its accumulation set, while the center of these oscillations moves in $\acc\{x_n \}$ with a vanishing speed.

Let us finish with a remark on the Eq.~\eqref{eq:osc_app}. At first sight, maximal intervals in Th.~\ref{th:main} and Th.~\ref{th:long_i} may seem artificial. A more satisfactory result would be
\begin{equation}\label{eq:osc_true}
  \frac{\sum_{i=1}^n \gamma_{i} (v_i + v_i^g + v_i^{\cX}) \1_{U}(x_i)}{ \sum_{i=1}^n \gamma_{i} \1_{U}(x_i)} \xrightarrow[n \rightarrow + \infty]{} 0 \, ,
\end{equation}
where $U$ is an open neighborhood of an accumulation point $x$. Looking at the proof of Th.~\ref{th:main}, to obtain Eq.~\eqref{eq:osc_true}, we could think of defining maximal intervals as $I_i = [n_{i_1}, n_{i_2}]$ such that $\{ x_{n_{i_1}}, \dots, x_{n_{i_2}}\} \subset U$ and $x_{n_{i_1} -1}, x_{n_{i_2}+ 1} \notin U$. Unfortunately, for this type of intervals we dont have an equivalent of Th.~\ref{th:long_i}, i.e. it may very well be that the quantity $\tau_{n_{i_2}} - \tau_{n_{i_1}}$ is bounded. Actually, it is not very hard to show, that for the function from \cite[Section 2]{zer20}, there are $x, U$ such that Eq.~\eqref{eq:osc_true} is false.

Nevertheless,  as explained in \cite{bol_zer_pau20}, Eq.~\eqref{eq:osc_app} is a good approximation of Eq.~\eqref{eq:osc_true}. Indeed, apply Th.~\ref{th:main} with $U$ and $V=U^{\delta}$, where $U^{\delta} = \{ y \in \bbR^d : \exists z \in U, \norm{z - y} < \delta\}$, then, as an approximation, we have
\begin{equation*}
 \lim_{\delta \rightarrow 0} \lim_{n \rightarrow + \infty} \frac{\sum_{i=1}^n \gamma_{i+1}(v_i + v_i^g + v_i^{\cX}) \1_{A}(x_i)}{ \sum_{i=1}^n \gamma_{i+1} \1_{A}(x_i)} \approx   \lim_{n \rightarrow + \infty}\frac{\sum_{i=1}^n \gamma_{i+1} (v_i + v_i^g + v_i^{\cX}) \1_U(x_i) }{\sum_{i=1}^n \gamma_{i+1} \1_{U}(x_i)} \, .
\end{equation*}

\section{Proofs}\label{sec:proof}

In the following we will denote $x_{n+1/2} = x_n - \gamma_n v_n + \gamma_{n} \eta_{n+1}$ and
\begin{equation}\label{eq:defNTn}
  N(T, n) = \inf \{ j \geq n \textrm{ s.t. } \tau_j - \tau_n \geq T\} \, .
\end{equation}
\subsection{Proof of Prop.~\ref{pr:apt} and \ref{pr:x_conv}}\label{sec:proof_apt}

To put ourselves in the context of Section~\ref{sec:pert_di} we need to alter the map $- \partial f- \partial g - \cN_{\cX}$ in a way that it verifies assumptions of Th.~\ref{th:ben_pert} and \ref{th:lyap_ben}. While this section is slightly technical, conceptually, we just find a set-valued map $G$ verifying assumptions of Th.~\ref{th:lyap_ben} and s.t. $x_{n+1} \in G(x_n)$. A convinced reader may want to skip to Section~\ref{subsec:two_sol_t_proof}.\\
We start with two technical lemmas.

\begin{lemma}\label{lm:noise_conv}
  Under Assumptions~\ref{hyp:model1} and \ref{hyp:model2}, almost surely, for every $T >0$, we have:
  \begin{equation}\label{eq:noise_conv}
    \lim_{n \rightarrow + \infty}  \sup_{n \leq j \leq N(T,n)} \norm{ \sum_{i=n}^j \gamma_i \eta_{i+1}} = 0 \, .
  \end{equation}
  As a consequence, the sequence $(\norm{x_{n+1/2}})$ is almost surely bounded.
\end{lemma}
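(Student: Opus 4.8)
The plan is to view $M_n := \sum_{i=1}^{n}\gamma_i\eta_{i+1}$ as a martingale and to control its \emph{oscillation over windows of fixed time-length $T$} rather than to prove its convergence. This distinction matters: from $\sum_i\gamma_i^{1+q/2}<\infty$ one only gets $\gamma_i\to 0$, and for $q>2$ the steps may be too large for $M_n$ to converge, which is exactly why \eqref{eq:noise_conv} is phrased through the windows $[n,N(T,n)]$. Since Assumption~\ref{hyp:model2}-\eqref{hyp:bound} gives $\sup_n\norm{x_n}<\infty$ only almost surely, with a random bound, I would first \emph{localize}: fix $R\in\bbN$, set $\cK=\{x:\norm{x}\le R\}$ and $\tilde\eta_{i+1}=\eta_{i+1}\1_{x_i\in\cK}$. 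Because $x_i$ is $\mcF_i$-measurable, $(\tilde\eta_{i+1})$ is again a martingale difference sequence, and Assumption~\ref{hyp:model2}-\eqref{hyp:noise} yields a uniform bound $\bbE[\norm{\tilde\eta_{i+1}}^q\mid\mcF_i]\le C_q$. On the event $\Omega_R=\{\sup_n\norm{x_n}\le R\}$ one has $\tilde\eta=\eta$, and since $\Omega_R$ increases to a set of full probability, it suffices to prove \eqref{eq:noise_conv} with $\eta$ replaced by the bounded-moment sequence $\tilde\eta$, and then let $R\to\infty$.

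The core estimate I would establish is that, for any deterministic index window $[a,b]$,
\[
  \bbE\!\left[ \sup_{a \le j \le b} \left\| \sum_{i=a}^{j} \gamma_i \tilde\eta_{i+1} \right\|^{q} \right]
  \;\le\; C \left( \sum_{i=a}^{b} \gamma_i \right)^{q/2-1} \sum_{i=a}^{b} \gamma_i^{\,1+q/2}\,.
\]
To get it, first apply the Burkholder--Davis--Gundy inequality to bound the left-hand side by a constant times $\bbE[(\sum_{i=a}^b\gamma_i^2\norm{\tilde\eta_{i+1}}^2)^{q/2}]$; then, writing $\gamma_i^2=\gamma_i\cdot\gamma_i$ and applying Jensen's inequality to the convex map $t\mapsto t^{q/2}$ with the weights $\gamma_i$ (of total mass $S=\sum_i\gamma_i$), convert the quadratic variation into $S^{q/2-1}\sum_i\gamma_i^{1+q/2}\norm{\tilde\eta_{i+1}}^q$, whose conditional expectation is controlled by $C_q$. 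The exponent $1+q/2$ matching Assumption~\ref{hyp:model2}-\eqref{hyp:noise} is precisely the output of this Jensen step. Since $\gamma_i\to0$, one has $\tau_{N(T,n)}-\tau_n\to T$, so over any window of time-length at most $T+1$ the factor $S^{q/2-1}$ is bounded by a constant depending only on $T$.

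The remaining difficulty --- and the step I expect to be the main obstacle --- is that the windows $[n,N(T,n)]$ \emph{overlap}, so a direct Borel--Cantelli argument over all starting indices $n$ fails, because the corresponding sum of expectations diverges. To repair this I would pass to \emph{disjoint} blocks: partition $\bbR_+$ into $[kT,(k+1)T)$, let $[a_k,b_k]$ be the associated index ranges, and set $D_k=\sup_{a_k\le j\le b_k}\norm{\sum_{i=a_k}^j\gamma_i\tilde\eta_{i+1}}$. The core estimate gives $\bbE[D_k^q]\le C_T\sum_{i=a_k}^{b_k}\gamma_i^{1+q/2}$, and summing over the disjoint blocks yields $\sum_k\bbE[D_k^q]\le C_T\sum_i\gamma_i^{1+q/2}<\infty$. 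Markov's inequality and Borel--Cantelli then give $D_k\to0$ almost surely. Finally, for large $n$ any window has time-length below $2T$, hence meets at most three consecutive blocks, and any partial sum inside a block is at most $2D_k$; therefore the window supremum in \eqref{eq:noise_conv} is bounded by $2(D_{k(n)}+D_{k(n)+1}+D_{k(n)+2})\to0$. Intersecting over positive rational $T$ and using that the window supremum is nondecreasing in $T$ upgrades this to all $T>0$.

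For the stated consequence I would apply \eqref{eq:noise_conv} with $T=1$. Since $\tau_n-\tau_n=0<1$ forces $N(1,n)\ge n+1$, the single index $j=n$ already appears in the supremum, so $\norm{\gamma_n\eta_{n+1}}\le \sup_{n\le j\le N(1,n)}\norm{\sum_{i=n}^j\gamma_i\eta_{i+1}}\to0$ almost surely. As $x_n$ is bounded and $f$ is locally Lipschitz, $v_n\in\partial f(x_n)$ is bounded on the relevant compact set, whence $\gamma_n v_n\to0$; combining these with the boundedness of $x_n$ shows that $x_{n+1/2}=x_n-\gamma_n v_n+\gamma_n\eta_{n+1}$ remains almost surely bounded.
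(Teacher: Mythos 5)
Your proof is correct, and its skeleton coincides with the paper's: the truncation $\tilde\eta_{n+1}=\eta_{n+1}\1_{x_n\in\cK}$, justified by the $\mcF_n$-measurability of $x_n$ and the almost sure boundedness of $(x_n)$, followed by a limit over the localization levels, is exactly what the paper does (the paper works with events $A=\{\forall n,\ \norm{x_n}\le C\}$ of probability $>1-\delta$ and lets $\delta\to0$; you use $\Omega_R$ increasing to a full-measure set — the same device). The divergence is in what comes next: where you prove the window estimate via Burkholder--Davis--Gundy plus Jensen and then handle the overlap of the windows $[n,N(T,n)]$ by a disjoint-block Borel--Cantelli argument, the paper simply invokes \cite[Prop.~4.2]{ben-(cours)99}, which is precisely the statement that a martingale-difference sequence with uniformly bounded $q$-th moments and $\sum_i\gamma_i^{1+q/2}<\infty$ satisfies \eqref{eq:noise_conv}. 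In effect you have reproved Benaïm's proposition from scratch, and your supporting observations — that $\sum_i\gamma_i\eta_{i+1}$ itself need not converge when $q>2$, and that a naive union bound over all starting indices $n$ diverges, forcing the passage to disjoint blocks — are accurate and are exactly the reasons that result is stated in windowed form. The paper's route buys brevity; yours buys self-containedness, makes visible where the exponent $1+q/2$ in Assumption~\ref{hyp:model2} comes from, and additionally spells out the final claim on $(\norm{x_{n+1/2}})$ (taking $j=n$ in the supremum to get $\gamma_n\eta_{n+1}\to0$, and bounding $\gamma_n v_n$ by local Lipschitz continuity of $f$ on the random compact set containing $(x_n)$), a point the paper asserts without proof.
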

\begin{proof}
  Indeed, since almost surely $\sup \norm{x_n} < + \infty$, for each $\delta > 0$, there is $C > 0$ s.t. if we denote $A = \{\forall n \in \bbN \norm{x_n} \leq C\}$, then $\bbP(A) > 1 - \delta$. Define $\tilde{\eta}_{n+1} = \eta_{n+1} \1_{\norm{x_n} \leq C}$, then $\bbE[\tilde{\eta}_{n+1} | \mcF_n] = 0$ and $\sup_{n \in \bbN}\bbE[\norm{\tilde{\eta}_{n+1}}^q] < + \infty$.
  Hence, by \cite[Prop. 4.2]{ben-(cours)99}, we have $\sup_{n \leq j \leq N(T,n)} \norm{ \sum_{i=n}^j \gamma_i \tilde{\eta}_{i+1}} \xrightarrow[n \rightarrow + \infty]{} 0$. Since $\delta$ is arbitrary, Eq.~\eqref{eq:noise_conv} follows.
\end{proof}

\begin{lemma}\label{lm:alt_di}
  Let Assumptions~\ref{hyp:model1} and~\ref{hyp:model2} hold. Let $A \in \Xi$ be a probability one set on which $(x_n)$ and $(x_{n+1/2})$ are bounded, and let $C$ be a random variable s.t.
  $\norm{x_n} < C$ and $C$ is finite valued on $A$. Then, for each  $\omega \in A$, there are two globally Lipschitz functions $\tilde{g}, \tilde{f} : \bbR^d \rightarrow \bbR$ and a bounded set-valued map $\widetilde{\cN}_{\cX} : \bbR^d \rightrightarrows \bbR^d$ s.t. in Eq.~\eqref{eq:spgd_iter} we have
   $v_n(w) \in \partial \tilde{f}(x_n(w))$, $v_n^g(w) \in \partial \tilde{g}(x_{n+1}(w))$ and $v_n^{\cX}(w) \in \widetilde{\cN}_{\cX}(x_{n+1}(w))$.\\
   Moreover, if $\sx$ is a solution to the DI:
   \begin{equation}\label{eq:di_alter}
     \dot{\sx}(t) \in - \partial \tilde{f}(\sx(t))- \partial \tilde{g}(\sx(t)) - \widetilde{\cN}_{\cX}(\sx(t)) \, ,
   \end{equation}
   and that $\sx$ remains in $B(0, C) \cap \cX$, then $\sx$ is a solution to the DI~\eqref{eq:DI_prox}.\\
   Finally, denoting $\widetilde{\cZ} = \{ x : 0 \in \partial \tilde{f}(x) + \partial \tilde{g}(x) + \widetilde{\cN}_{\cX}(x)\}$, we have the equality $\widetilde{\cZ} \cap B(0, C) = \cZ \cap B(0, C)$.
\end{lemma}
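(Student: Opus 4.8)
The plan is to freeze a realization $\omega\in A$, so that $C=C(\omega)$ becomes a genuine finite constant, and then to modify $f,g,\cN_\cX$ only outside a ball slightly larger than $B(0,C)$. Fix $R>C$ and let $\pi$ be the (1-Lipschitz) Euclidean projection onto $\cl{B(0,R)}$. I would take $\tilde f=f\circ\pi$ and $\tilde g=g\circ\pi$. Since $f$ and $g$ are locally Lipschitz, they are Lipschitz on the compact set $\cl{B(0,R)}=\pi(\bbR^d)$, and precomposition with the $1$-Lipschitz map $\pi$ then makes $\tilde f,\tilde g$ globally Lipschitz. For $\norm{x}<R$ the map $\pi$ is the identity on a neighbourhood of $x$, so $\tilde f=f$ and $\tilde g=g$ near such $x$, whence $\partial\tilde f(x)=\partial f(x)$ and $\partial\tilde g(x)=\partial g(x)$. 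Because $\norm{x_n}<C<R$ and $\norm{x_{n+1}}<C$, this already gives $v_n\in\partial\tilde f(x_n)$ and $v_n^g\in\partial\tilde g(x_{n+1})$.

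For the normal cone I would keep $\cN_\cX$ on the ball but truncate it at infinity. Let $L$ be a common Lipschitz constant for $f$ and $g$ on $\cl{B(0,C)}$, so that $\norm{u}\le L$ for every $u\in\partial f(x)\cup\partial g(x)$, $x\in B(0,C)$; fix $M\ge 2L$ and set $\widetilde{\cN}_\cX(x)=\cN_\cX(\pi(x))\cap\cl{B(0,M)}$. This map has nonempty (it contains $0$), compact and convex values, is bounded by $M$, and inherits upper semicontinuity from the outer semicontinuity of $x\mapsto\cN_\cX(x)$ together with the continuity of $\pi$; hence $-\partial\tilde f-\partial\tilde g-\widetilde\cN_\cX$ meets the hypotheses of Th.~\ref{th:aub_cel}. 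On $B(0,C)$ one has $\pi=\mathrm{id}$, so $\widetilde\cN_\cX$ is exactly $\cN_\cX$ capped at radius $M$, and the required membership $v_n^\cX\in\widetilde\cN_\cX(x_{n+1})$ reduces to the single estimate $\norm{v_n^\cX}\le M$.

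The last two assertions are then clean consequences of the inclusion $\widetilde\cN_\cX(x)\subset\cN_\cX(x)$ valid for $x\in B(0,C)$. If $\sx$ solves \eqref{eq:di_alter} and stays in $B(0,C)\cap\cX$, then for a.e.\ $t$ the inclusion, read at $\sx(t)\in B(0,C)$, becomes $\dot\sx(t)\in-\partial f(\sx(t))-\partial g(\sx(t))-\cN_\cX(\sx(t))$, i.e.\ $\sx$ solves \eqref{eq:DI_prox}. For the critical sets, $\widetilde\cZ\cap B(0,C)\subset\cZ\cap B(0,C)$ is immediate; conversely, if $0\in\partial f(x)+\partial g(x)+\cN_\cX(x)$ with $x\in B(0,C)$, then the normal-cone element equals $-(u+w)$ with $u\in\partial f(x)$, $w\in\partial g(x)$, so it has norm at most $2L\le M$ and therefore already lies in $\widetilde\cN_\cX(x)$; hence $x\in\widetilde\cZ$ and $\widetilde\cZ\cap B(0,C)=\cZ\cap B(0,C)$.

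The one genuinely delicate point — and the place where Assumption~\ref{hyp:model2} must be used — is to choose $M=M(\omega)$ finite, i.e.\ to bound $\sup_n\norm{v_n^\cX}$ on $A$. Rewriting the prox optimality of Eq.~\eqref{eq:spgd_iter} as $\gamma_n(v_n^g+v_n^\cX)=x_{n+1/2}-x_{n+1}$ and comparing $x_{n+1}$ with $\mathrm{proj}_\cX(x_{n+1/2})$ through the Lipschitz bound on $g$ controls the tangential displacement, but a priori the normal part is only of the order of $\norm{x_{n+1/2}-x_n}/\gamma_n=\norm{\eta_{n+1}-v_n}$, which is governed by the noise. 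I therefore expect the crux to be showing, via the $q$-th moment control and $\sum_i\gamma_i^{1+q/2}<\infty$ of Assumption~\ref{hyp:model2}, that these one-step multipliers remain bounded along the trajectory on a set of probability one, so that $A$ may be taken inside $\{\sup_n\norm{v_n^\cX}<\infty\}$; once that finite bound $M$ is in hand, the localization carried out above is routine.
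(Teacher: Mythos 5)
Your construction is the same as the paper's: there too, $\tilde{f}=f\circ\Pi_{C+1}$ and $\tilde{g}=g\circ\Pi_{C+1}$ for a projection $\Pi_{C+1}$ onto a ball slightly larger than $B(0,C)$, and $\widetilde{\cN}_{\cX}$ is a normal cone composed with a projection and truncated at a finite radius. Your verification of the last two assertions is complete, and on one point more careful than the paper's: you observe that the truncation radius must dominate $L_f+L_g$ (you take $M\ge 2L$) for the inclusion $\cZ\cap B(0,C)\subset\widetilde{\cZ}$ to go through, whereas the paper truncates at $\max(C_2,L_f,L_g)$.

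The genuine gap is exactly the step you defer, and it cannot be closed in the way you hope. As you computed, the only available estimate is $\norm{v_n^{\cX}}\le \norm{\eta_{n+1}-v_n}+\dots$, i.e.\ of the order of the one-step noise. Assumption~\ref{hyp:model2} controls only conditional $q$-th moments, which is compatible with $\sup_n\norm{\eta_{n+1}}=+\infty$ a.s.\ (i.i.d.\ Gaussian noise), and one can build examples satisfying every hypothesis of the lemma in which $\limsup_n\norm{v_n^{\cX}}=+\infty$ almost surely, so that no bounded map $\widetilde{\cN}_{\cX}$ can contain all the $v_n^{\cX}$. Take $d=1$, $\cX=[-1,0]$, $f(x)=-x$, $g=0$, $\gamma_n=1/(n+1)$, $(\eta_n)$ i.i.d.\ standard Gaussian. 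Then $x_{n+1/2}=x_n+\gamma_n(1+\eta_{n+1})$, $x_{n+1}=\Pi_{\cX}(x_{n+1/2})$, and $v_n^{\cX}=(x_{n+1/2}-x_{n+1})/\gamma_n$ is the unique admissible multiplier in Eq.~\eqref{eq:spgd_iter}. Since $\sum\gamma_n=+\infty$ while $\sum\gamma_n\eta_{n+1}$ converges a.s., the set $\{m : x_m=0\}$ is a.s.\ infinite (otherwise $x_{m+1}\ge x_m+\gamma_m(1+\eta_{m+1})$ eventually, forcing $x_m\to+\infty$, contradicting $x_m\le 0$); and whenever $x_m=0$ and $\eta_{m+1}>K$ one gets $v_m^{\cX}=1+\eta_{m+1}>K$. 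By the conditional Borel--Cantelli lemma this happens infinitely often for every $K$, hence $\sup_n v_n^{\cX}=+\infty$ a.s.

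You should also not be reassured by the paper's own proof of this point: it consists of the single sentence ``since $x_{n+1/2}$ is bounded, there is $C_2$ s.t.\ $\sup_n\norm{v_n^{\cX}}<C_2$'', which does not follow --- boundedness of $(x_{n+1/2})$ and $(x_{n+1})$ only yields $\sup_n\gamma_n\norm{v_n^{\cX}}<+\infty$. So your proposal reproduces the paper's construction and correctly isolates the one step carrying all the difficulty; but that step is not routine, is not actually proved in the paper, and under the stated assumptions it can fail. Any repair must either strengthen the hypotheses (e.g.\ a.s.\ bounded noise, or $\cX=\bbR^d$ and $g$ smooth so that the multipliers disappear), or restructure the argument so that the large-noise part of $v_n^{\cX}$ is absorbed into the perturbation term $\rho$ of the perturbed-solution framework rather than into the set-valued map.
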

\begin{proof}
   Let $\Pi_{C+1} : \bbR^d \rightarrow \bbR^d$ be the projection on $B(0, C+1)$. Define $\tilde{f}(x) = f(\Pi_{C+1}(x))$, $\tilde{g}(x) = g(\Pi_{C + 1}(x))$. By construction, we have that $v_n \in \partial \tilde{f}(x_n)$ and $v_n^g \in \partial g(x_{n+1})$ and  that $v_n, v_n^g$ are bounded by $L_{\tilde{f}}, L_{\tilde{g}}$
   the Lipschitz constants of $\tilde{f}$ and $\tilde{g}$.
  Hence, since $x_{n+1/2}$ is bounded, there is $C_2$ s.t.
   $ \sup \{ \norm{v_n^{\cX}} : n \in \bbN\} < C_2$.
  Defining $\widetilde{\cN}_{\cX}(x) = \{ v: \norm{v} \leq \max(C_2,L_f, L_g), v \in \Pi_{\cX}(x)\}$, where $\Pi_{\cX}$ is a projection on $\cX$, proves the first claim. The two other statements immediately follow from our construction.
\end{proof}
 To prove Prop.~\ref{pr:apt} it remains to show that $\sX$ is a perturbed solution to the DI~\eqref{eq:di_alter}.
 For $t \in [\tau_n, \tau_{n+1})$, we define $\rho(t) = \eta_{n+1}$ and $\ssb(t) = \norm{x_{n+1} - x_n}$. The condition on $\rho$ immediately follows from Lemma~\ref{lm:noise_conv}. The condition on $\ssb$ follows from the following lemma.

\begin{lemma}
  Under Assumptions~\ref{hyp:model1} and \ref{hyp:model2}, almost surely, we have that $\norm{x_{n+1} - x_n} \xrightarrow[n \rightarrow + \infty]{} 0$.
\end{lemma}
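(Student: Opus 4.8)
The plan is to read the statement off the update rule~\eqref{eq:spgd_iter}, which gives
\begin{equation*}
  x_{n+1} - x_n = -\gamma_n(v_n + v_n^g + v_n^{\cX}) + \gamma_n \eta_{n+1} \, ,
\end{equation*}
and to show that both the drift term $\gamma_n(v_n + v_n^g + v_n^{\cX})$ and the noise term $\gamma_n \eta_{n+1}$ tend to $0$ almost surely. I would fix once and for all the probability-one event $A$ on which $(x_n)$ and $(x_{n+1/2})$ are simultaneously bounded; this event exists by Assumption~\ref{hyp:model2}-\eqref{hyp:bound} together with the second assertion of Lemma~\ref{lm:noise_conv}, and it is precisely the event on which Lemma~\ref{lm:alt_di} is available. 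All the estimates below are then carried out pointwise on $A$, so that the constants may be random but are finite.

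For the drift term, the key point is that on $A$ the three subgradient components are uniformly bounded in $n$. Indeed, by Lemma~\ref{lm:alt_di} we may write $v_n \in \partial \tilde f(x_n)$, $v_n^g \in \partial \tilde g(x_{n+1})$ and $v_n^{\cX} \in \widetilde{\cN}_{\cX}(x_{n+1})$, where $\tilde f, \tilde g$ are globally Lipschitz and $\widetilde{\cN}_{\cX}$ is bounded; hence $\norm{v_n + v_n^g + v_n^{\cX}} \leq C$ for some finite constant $C$ and all $n$. It then remains to observe that $\gamma_n \to 0$, which follows from Assumption~\ref{hyp:model2}-\eqref{hyp:noise}: the summability $\sum_{i} \gamma_i^{1+q/2} < +\infty$ forces $\gamma_n^{1+q/2} \to 0$ and therefore $\gamma_n \to 0$. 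Combining the two facts gives $\gamma_n\norm{v_n + v_n^g + v_n^{\cX}} \leq C\gamma_n \to 0$.

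For the noise term I would invoke Lemma~\ref{lm:noise_conv} directly. Fix any $T > 0$ and note that $N(T,n) \geq n$ by the definition~\eqref{eq:defNTn}, so that the index $j = n$ is admissible in the supremum appearing in~\eqref{eq:noise_conv}. Consequently
\begin{equation*}
  \norm{\gamma_n \eta_{n+1}} = \Bigl\lVert \sum_{i=n}^{n}\gamma_i \eta_{i+1} \Bigr\rVert \leq \sup_{n \leq j \leq N(T,n)} \Bigl\lVert \sum_{i=n}^{j}\gamma_i \eta_{i+1} \Bigr\rVert \xrightarrow[n \rightarrow + \infty]{} 0
\end{equation*}
almost surely. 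Adding this to the drift estimate through the triangle inequality yields $\norm{x_{n+1} - x_n} \to 0$ on $A$, which is the desired conclusion.

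I do not anticipate a genuine obstacle: the lemma is essentially a corollary of the two preceding results, and the only points deserving care are the extraction of $\gamma_n \to 0$ from the summability hypothesis and the elementary remark that the one-step increment $\gamma_n\eta_{n+1}$ is dominated by the maximal partial-sum fluctuation already controlled in Lemma~\ref{lm:noise_conv}.
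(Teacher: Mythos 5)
Your argument is logically sound as a deduction from the paper's earlier lemmas, but it takes a genuinely different route from the paper's own proof, and the difference is substantive. You read the increment off Eq.~\eqref{eq:spgd_iter}, bound the drift term by the uniform bounds of Lemma~\ref{lm:alt_di} together with $\gamma_n \to 0$ (correctly extracted from the summability hypothesis in Assumption~\ref{hyp:model2}), and bound the noise term by the $j=n$ term of the supremum in Lemma~\ref{lm:noise_conv}; each of these steps is valid. The paper argues differently: it first gets $\norm{x_{n+1/2} - x_n} \to 0$ from Lemma~\ref{lm:noise_conv}, and then, instead of invoking any bound on $v_n^g$ or $v_n^{\cX}$, it uses the minimizing property of the proximal step, comparing the prox objective at $x_{n+1}$ with its value at the feasible point $x_n$,
\begin{equation*}
  g(x_{n+1}) + \frac{1}{2\gamma_n}\norm{x_{n+1} - x_{n+1/2}}^2 \leq g(x_n) + \frac{1}{2\gamma_n}\norm{x_n - x_{n+1/2}}^2 \, ,
\end{equation*}
which, after expanding the square and using the Lipschitz constant of $g$ near the bounded iterates, yields $\norm{x_{n+1} - x_n} \leq \gamma_n L_g + \norm{x_n - x_{n+1/2}}$. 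What the paper's route buys is independence from the most delicate claim of Lemma~\ref{lm:alt_di}. Indeed, since $\gamma_n(v_n^g + v_n^{\cX}) = x_{n+1/2} - x_{n+1}$ and $v_n^g$ is bounded by a Lipschitz constant, a uniform bound on $\norm{v_n^{\cX}}$ is equivalent to the estimate $\norm{x_{n+1} - x_{n+1/2}} = O(\gamma_n)$; in other words, the normal-cone boundedness you cite already encodes a quantitative version of the very statement being proved, and its justification inside Lemma~\ref{lm:alt_di} is terse (the natural estimate only bounds $\norm{v_n^{\cX}}$ by a constant plus $\norm{\eta_{n+1}}$, and the noise is not assumed almost surely bounded, only to have bounded conditional moments). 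Your proof is therefore shorter but outsources all of the difficulty to Lemma~\ref{lm:alt_di}, whereas the paper's prox-inequality argument is self-contained, requires only $L_g$ and Lemma~\ref{lm:noise_conv}, and is in effect the estimate one would need in order to substantiate the bound on $v_n^{\cX}$ that you rely on.
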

\begin{proof}
  By Lemma~\ref{lm:noise_conv}, we have that $\norm{x_{n+1/2} - x_n} \xrightarrow[n \rightarrow + \infty]{} 0$, moreover, we have:
\begin{equation*}
  g(x_{n+1}) + \frac{1}{2 \gamma_n} \norm{x_{n+1} - x_{n+1/2}}^2 \leq g(x_n) + \frac{1}{2 \gamma_n}\norm{x_{n} - x_{n+1/2}}^2\, .
\end{equation*}
Therefore,
\begin{equation*}
  \begin{split}
      \frac{1}{2 \gamma_n} \norm{x_{n+1} -x_{n}}^2 &\leq g(x_{n}) - g(x_{n+1}) - \frac{1}{\gamma_n}\scalarp{x_{n+1} - x_n}{x_n - x_{n+1/2}}\\
       &\leq  \norm{x_{n+1} - x_n}\left(L_g + \frac{\norm{x_n - x_{n+1/2}}}{\gamma_n}\right) \, ,
  \end{split}
\end{equation*}
and
\begin{equation*}
  \norm{x_{n+1}- x_n} \leq \gamma_n L_g + \norm{x_n - x_{n+1/2}} \, ,
\end{equation*}
which finishes the proof.
\end{proof}

To finish the proof of Prop.~\ref{pr:apt} consider $t_n \rightarrow + \infty$ and $\sx$ s.t. $ \boldsymbol d(\sX(t_n+ \cdot), \sx) \rightarrow 0$. Then, by Th.~\ref{th:ben_pert}, $\sx$ is a solution to the DI~\eqref{eq:di_alter}, moreover, it remains in $B(0, C) \cap \cX$, therefore, it is also a solution to the DI~\eqref{eq:DI_prox}.

For the proof of Prop.~\ref{pr:x_conv}, notice that $\tilde{f} + \tilde{g}$ is path differentiable (as a composition of path differentiable functions). Then, in the same way as in Section~\ref{sec:path_diff}, we have that $\tilde{f} + \tilde{g}$ is a strict Lyapunov function for the DI~\eqref{eq:di_alter} and for the set $\tilde{\cZ}$. Since $\acc \{ x_n \} = \sL_{\sX} \subset \cl{B(0, C)}$,
by Th.~\ref{th:lyap_ben} we have that $\sL_{\sX} \subset \widetilde{\cZ}\cap \cl{B(0,C)} \subset \cZ$, and that $f + g$ is constant on $\acc \{ x_n\}$.

\subsection{Proof of Th.~\ref{th:two_sol_T}}\label{subsec:two_sol_t_proof}

\begin{lemma}\label{lm:sol_const}
  Let Assumptions~\ref{hyp:model1}--~\ref{hyp:path_dif} hold, let $\tau_n$ be a positive sequence, with $\tau_n \rightarrow + \infty$, and $\sx$ s.t. $\sX(\tau_n + \cdot) \rightarrow \sx$, then
  \begin{equation}
    (f+g)(\sx(h)) \leq (f+g)(\sx(0)), \quad \forall h \in \bbR_+ \, .
  \end{equation}
Moreover, if for some $h \geq 0$, $(f+g)(\sx(h)) = (f+g)(\sx(0))$, then $\sx(h') = \sx(0)$ for every $h' \in [0, h]$.
  If additionally Assumption~\ref{hyp:Sard} holds, then:
  \begin{equation}\label{eq:sol_const}
    \sx(h) = \sx(0) , \quad \forall h \in \bbR_+ \, .
  \end{equation}
\end{lemma}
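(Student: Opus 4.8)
The plan is to first recognize $\sx$ as a genuine solution of the differential inclusion and then read off everything from the energy identity~\eqref{eq:comp_grad_lyap}. Since $\tau_n \to +\infty$ and $\bs d(\sX(\tau_n + \cdot), \sx) \to 0$, Prop.~\ref{pr:apt} tells us that $\sx$ is a solution to the DI~\eqref{eq:DI_prox}. Under Assumption~\ref{hyp:path_dif} the functions $f$ and $g$ are path differentiable, so the computation of Section~\ref{sec:path_diff} applies verbatim and gives, for every $h \geq 0$,
\[
  (f+g)(\sx(h)) - (f+g)(\sx(0)) = -\int_0^h \norm{\dot{\sx}(u)}^2 \dif u \, .
\]
As the right-hand side is nonpositive, the first inequality $(f+g)(\sx(h)) \leq (f+g)(\sx(0))$ follows at once.

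For the second claim, I would observe that if $(f+g)(\sx(h)) = (f+g)(\sx(0))$ for some $h \geq 0$, then the displayed identity forces $\int_0^h \norm{\dot{\sx}(u)}^2 \dif u = 0$, whence $\dot{\sx}(u) = 0$ for almost every $u \in [0,h]$. Since $\sx$ is absolutely continuous, $\sx(h') = \sx(0) + \int_0^{h'} \dot{\sx}(u) \dif u = \sx(0)$ for every $h' \in [0,h]$, which is exactly the asserted constancy on $[0,h]$.

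For the last claim I would bring in Assumption~\ref{hyp:Sard}. The key observation is that the \emph{whole} trajectory stays in the limit set: for each fixed $t \geq 0$ we have $\sx(t) = \lim_n \sX(\tau_n + t)$, and since $\tau_n \to +\infty$, for every $s \geq 0$ eventually $\tau_n + t \geq s$, so $\sx(t) \in \cl{\{\sX(u): u \geq s\}}$; intersecting over $s$ gives $\sx(t) \in \sL_{\sX}$ for all $t \geq 0$. By Prop.~\ref{pr:x_conv} (which is available precisely because Assumption~\ref{hyp:Sard} holds) we have $\sL_{\sX} = \acc\{x_n\} \subset \cZ$ and $f+g$ is constant on $\sL_{\sX}$; therefore $(f+g)(\sx(t)) = (f+g)(\sx(0))$ for every $t \geq 0$. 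Applying the second claim with $h$ arbitrary then yields $\sx(h) = \sx(0)$ for all $h \in \bbR_+$, which is~\eqref{eq:sol_const}. Alternatively, staying closer to the Sard condition itself, one can argue that $t \mapsto (f+g)(\sx(t))$ is continuous, so its image is an interval; this image lies in the critical-value set $\{f(x)+g(x): x \in \cZ\}$ of empty interior, and a nondegenerate interval has nonempty interior, so the image is a single point and the map is constant, after which the second claim again concludes.

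The argument requires no new estimate; the only points deserving care are (i) that the energy identity~\eqref{eq:comp_grad_lyap}, derived for solutions of~\eqref{eq:DI_prox}, is legitimately available here, which is secured by Prop.~\ref{pr:apt} together with Assumption~\ref{hyp:path_dif}, and (ii) the mild topological step of the last claim, namely that $\sx(t)$ remains in $\sL_{\sX}$ so that its $(f+g)$-values are confined to a single constant (equivalently, to an interval of empty interior). The genuine content is thus the monotonicity furnished by~\eqref{eq:comp_grad_lyap} combined with the Sard-type Assumption~\ref{hyp:Sard}, and I expect step (ii) to be the main, albeit minor, obstacle.
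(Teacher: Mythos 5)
Your proof is correct and follows essentially the same route as the paper's: Prop.~\ref{pr:apt} identifies $\sx$ as a solution of the DI~\eqref{eq:DI_prox}, the energy identity~\eqref{eq:comp_grad_lyap} gives the monotonicity and the constancy-on-$[0,h]$ claim, and under Assumption~\ref{hyp:Sard} the inclusion $\sx(\bbR_+) \subset \sL_{\sX} = \acc\{x_n\} \subset \cZ$ together with Prop.~\ref{pr:x_conv} forces $(f+g)\circ\sx$ to be constant, whence $\dot{\sx} = 0$ almost everywhere. You merely spell out two points the paper leaves implicit (the limit-set argument for $\sx(\bbR_+) \subset \acc\{x_n\}$ and the absolute-continuity step in the middle claim), and your alternative empty-interior argument is a harmless variant of the same mechanism.
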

\begin{proof}
  By Prop.~\ref{pr:apt}, $\sx$ is a solution to the DI~\eqref{eq:DI_prox}, and the first result follows by Eq.~\eqref{eq:comp_grad_lyap}.\\
   Under Assumption~\ref{hyp:Sard}, we have that $\sx(\bbR_+) \subset \acc \{x_n\} \subset \cZ$,
   hence, by Prop.~\ref{pr:x_conv}, we have that $(f+g)\circ\sx$ is constant. Using Assumption~\ref{hyp:path_dif}, we have for all $h \in \bbR_+$,
  \begin{equation}\label{eq:F_const}
    0 = (f+g)(\sx(h)) - (f+g)(\sx(0)) = - \int_{0}^h \norm{\dot{\sx}(u)}^2 \dif u \, .
  \end{equation}
  This implies that $ \int_{0}^h \norm{\dot{\sx}(u)}^2 \dif u = 0$. Hence, $\dot{\sx}(h) = 0$ for almost every $h \in \bbR_{+}$ and we obtain Eq.~\eqref{eq:sol_const}.

\end{proof}

  Suppose that there is $T > 0$ such that $\tau_{n_j} - \tau_{n_i} \leq T$. The sequence $\sX(\tau_{n_i} + \cdot)$ is relatively compact, and after extraction it converges to $\sx$ a solution to~\eqref{eq:DI_prox}. Extract once again to have $\tau_{n_j} - \tau_{n_i} \rightarrow h$. Then
  \begin{equation*}
    \sX(\tau_{n_j}) - \sX(\tau_{n_i}) \rightarrow \sx(h) - \sx(0) = y - x \, ,
  \end{equation*}
and we obtain a contradiction with Lemma~\ref{lm:sol_const}.

\subsection{Proof of Th.~\ref{th:long_i}}
The next lemma is the key ingredient for the proofs of Th.~\ref{th:long_i} and Th.~\ref{th:main}.
\begin{lemma}\label{lm:int_zer}
  Under Assumptions~\ref{hyp:model1}--\ref{hyp:Sard}, we have
  \begin{equation*}
    \sup_{ n \leq j \leq N(T,n)} \norm{\sum_{i=n}^j \gamma_i (v_i + v_i^g + v_i^{\cX})} \xrightarrow[n \rightarrow + \infty] {} 0\, .
  \end{equation*}
\end{lemma}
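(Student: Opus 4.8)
The plan is to exploit the fact that the drift $v_i + v_i^g + v_i^{\cX}$ telescopes through the iteration~\eqref{eq:spgd_iter}. Rewriting that recursion as $\gamma_i(v_i + v_i^g + v_i^{\cX}) = (x_i - x_{i+1}) + \gamma_i \eta_{i+1}$ and summing over $i$ from $n$ to $j$ gives the identity
\begin{equation*}
  \sum_{i=n}^j \gamma_i (v_i + v_i^g + v_i^{\cX}) = (x_n - x_{j+1}) + \sum_{i=n}^j \gamma_i \eta_{i+1} \, .
\end{equation*}
It therefore suffices to bound the two terms on the right separately, uniformly over $n \leq j \leq N(T,n)$. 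The noise term is handled immediately by Lemma~\ref{lm:noise_conv}, which states precisely that $\sup_{n \leq j \leq N(T,n)} \norm{\sum_{i=n}^j \gamma_i \eta_{i+1}} \to 0$ almost surely. So the whole task reduces to proving that
\begin{equation*}
  \sup_{n \leq j \leq N(T,n)} \norm{x_n - x_{j+1}} \xrightarrow[n \rightarrow +\infty]{} 0 \, .
\end{equation*}

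To control this displacement term I would argue by contradiction. First note that Assumption~\ref{hyp:model2}-\eqref{hyp:noise} forces $\gamma_n \to 0$, so the window $[\tau_n, \tau_{N(T,n)}]$ has length at most $T + \sup_{i \geq n}\gamma_i$, hence is bounded by some $T'$ uniformly in $n$. Suppose the supremum above does not vanish: then there exist $\varepsilon > 0$, a subsequence $n_k \to +\infty$, and indices $j_k$ with $n_k \leq j_k \leq N(T, n_k)$ such that $\norm{x_{n_k} - x_{j_k + 1}} \geq \varepsilon$. Writing $h_k = \tau_{j_k + 1} - \tau_{n_k} \in [0, T']$, I would extract (by Bolzano--Weierstrass) a further subsequence along which $h_k \to h$, and simultaneously (by the relative compactness in Prop.~\ref{pr:apt}) along which $\sX(\tau_{n_k} + \cdot) \to \sx$ for some limit curve $\sx$.

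The key step is that this limit $\sx$ must be constant. Since $\tau_{n_k} \to +\infty$ and $\sX(\tau_{n_k} + \cdot) \to \sx$, Lemma~\ref{lm:sol_const} (which is exactly where Assumptions~\ref{hyp:path_dif} and~\ref{hyp:Sard} enter, through the Lyapunov identity~\eqref{eq:comp_grad_lyap} and the Sard-type condition) yields $\sx(\cdot) \equiv \sx(0)$. Now $x_{n_k} = \sX(\tau_{n_k})$ and $x_{j_k + 1} = \sX(\tau_{n_k} + h_k)$, and uniform convergence on $[0, T']$ together with $h_k \to h$ and continuity of $\sx$ gives $\sX(\tau_{n_k}) \to \sx(0)$ and $\sX(\tau_{n_k} + h_k) \to \sx(h) = \sx(0)$. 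Hence $\norm{x_{n_k} - x_{j_k+1}} \to \norm{\sx(0) - \sx(h)} = 0$, contradicting the lower bound $\varepsilon$. This establishes the displacement estimate, and combined with the noise bound completes the proof. The main obstacle is precisely this last paragraph: unlike the noise term, the displacement does not go to zero by any elementary summability argument (over a window of fixed time length $T$ the iterates can move an order-$T$ amount, as the example $f(x)=\norm{x}$ shows), and it is only the dynamical fact that every subsequential limit trajectory is a \emph{stationary} solution of the DI~\eqref{eq:DI_prox} that forces the cancellation.
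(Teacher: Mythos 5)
Your proof is correct and follows essentially the same route as the paper: telescoping the recursion~\eqref{eq:spgd_iter} to reduce the drift sum to a displacement plus a noise sum, killing the noise with Lemma~\ref{lm:noise_conv}, and then ruling out a non-vanishing displacement by extracting a limit curve via Prop.~\ref{pr:apt} and invoking the constancy of limit solutions from Lemma~\ref{lm:sol_const}. In fact your write-up is slightly more careful than the paper's (correct indexing $x_{j_k+1}$ and the explicit uniform-convergence step), but the ideas are identical.
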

\begin{proof}

    Suppose that we have  $\varepsilon > 0$ and two sequences $n_k$ and $n_k \leq j_k \leq N(T, n_k)$, such that for $n_k$ large enough:
    \begin{equation*}
      \norm{\sum_{i=n_k}^{j_k}  \gamma_i (v_i + v_i^g + v_i^{\cX})} > \varepsilon \, .
    \end{equation*}
    This implies:
    \begin{equation*}
      \norm{x_{j_k} - x_{n_k} + \sum_{i=n_k}^{j_k} \gamma_i \eta_{i+1}} > \varepsilon \, .
    \end{equation*}
    Extract a sequence such that $\sX(\tau_{n_k}  + \cdot)$ converges to $\sx$ and $\tau_{j_k} - \tau_{n_k} \rightarrow h$, with $h \leq T$. Then $x_{j_k} \rightarrow \sx(h)$ and $x_{n_k} \rightarrow \sx(0)$, but
    $\norm{\sx(h) - \sx(0)} \geq \varepsilon$ which is impossible by Lemma~\ref{lm:sol_const}.
\end{proof}

  Suppose that no $I_i$ is unbounded, then we can choose $n_i \in I_i = [n_{i_1}, n_{i_2}]$ such that $x_{n_i} \in U$. Since $x_{n_{i_2 + 1}}$ is in $V^c$, after extraction $x_{n_i} \rightarrow y_1$ and $x_{n_{i_2 + 1}} \rightarrow y_2$, with $y_2 \neq y_1$, moreover:
  \begin{equation}
     \tau_{n_{i_2+1}}  - \tau_{n_{i}}  - \gamma_{n_{i_2+1}}  \leq \tau_{n_{i_2}} - \tau_{n_{i_1}} \, .
  \end{equation}
  By Th.~\ref{th:two_sol_T}, the first term of this inequality tends to infinity.

  \subsection{Proof of Th.~\ref{th:main}}
    Take $I_i$ as in Th.~\ref{th:long_i}, and $A_n = \bigcup_{i \leq n} I_i$.
    Define
    \begin{equation*}
      u_n = \frac{a_n}{b_n} =   \frac{\sum \gamma_i (v_i + v_i^g + v_i^{\cX})\1_{A_n}(x_i)} {\sum \gamma_i \1_{A_n}(x_i)}\, .
    \end{equation*}
    Then,
    \begin{equation}\label{eq:un}
      u_{n+1} = \frac{a_n + \sum \gamma_i( v_i + v_i^g + v_i^{\cX})\1_{I_{n+1}}(x_i)}{b_n +  \sum \gamma_i \1_{I_{n+1}}(x_i)} \, .
    \end{equation}
 Fix $\varepsilon > 0$, by Lemma~\ref{lm:int_zer}, there is $n_0$ such that, for $n \geq n_0$, $\norm{\sum_{i=n_k}^{j_k} \gamma_i (v_i + v_i^g + v_i^{\cX})} \leq \varepsilon $.  Decompose $I_i = [n_{i_1}, n_{i_2}] = \bigcup_{1 \leq k \leq K_i} [a_{i, k}, a_{i, k +1}]$, with $a_{i,1} = n_{i_1}$ and $a_{i,k +1} = \min\{ N(T, a_{i,k}), n_{i_2} \}$.
We obtain:
    \begin{equation*}
      \begin{split}
        u_{n+1} &= \frac{a_n + \sum_{k\leq K_n}\sum_{i =a_{n,k}}^{a_{n,k+1}} \gamma_i (v_i + v_i^g + v_i^{\cX})}{b_n + \sum_{k \leq K_n}\sum_{i = a_{n,k}}^{a_{n,k +1}} \gamma_i}\\
         &\leq \frac{a_n + (K_n)\varepsilon}{b_n + (K_n-1) T}\, .
      \end{split}
    \end{equation*}
      By Th.~\ref{th:long_i}, we have that $K_n \rightarrow + \infty$ and, therefore, for $n$ large enough:
      \begin{equation*}
        u_{n+1} \leq \frac{a_n + 2(K_n - 1)\varepsilon}{b_n + (K_n-1) T} \, .
      \end{equation*}
      Hence, by induction:
    \begin{equation*}
      u_{n+j} \leq \frac{a_n + 2\varepsilon \sum_{k = n}^{n+j -1}(K_i- 1) }{b_n + T\sum_{k = n}^{n+j -1} (K_i -1)} \, .
    \end{equation*}
 Therefore, $\lim u_n \leq \frac{2\varepsilon}{T}$. Since $\varepsilon$ is arbitrary, this finishes the proof.

\section*{Fundings}
This work was supported by R\'egion Île-de-France.

\bibliographystyle{plain}
\bibliography{math}
\end{document}